\tikzset{join/.code=\tikzset{after node path={%
\ifx\tikzchainprevious\pgfutil@empty\else(\tikzchainprevious)%
edge[every join]#1(\tikzchaincurrent)\fi}}}
\tikzset{>=stealth',every on chain/.append style={join},
         every join/.style={->}}
\tikzset{
    >=stealth',
    punkt/.style={
           rectangle,
           rounded corners,
           draw=black, very thick,
           text width=6.5em,
           minimum height=2em,
           text centered},
    pil/.style={
           ->,
           thick,
           shorten <=2pt,
           shorten >=2pt,}
}
\newcommand{\bea}{\begin{eqnarray}}
\newcommand{\eea}{\end{eqnarray}}
\newcommand{\be}{\begin{equation}}
\newcommand{\ee}{\end{equation}}
\newcommand{\nn}{\nonumber}
\newcommand{\bra}{\langle}
\newcommand{\ket}{\rangle}
\newcommand{\im}{\textrm{Im}\,}
\newcommand{\re}{\textrm{Re}\,}
\newcommand{\comp}{\mathbb{C}}
\newcommand{\li}{\mathrm{Li}\,}
\newcommand{\widecheck}[1]{\stackrel{\!\!\!\vee}{#1}}
\def\go{\omega}
\newtheorem{theorem}{Theorem}[section]
\newtheorem{lemma}[theorem]{Lemma}
\newtheorem{proposition}[theorem]{Proposition}
\newtheorem{definition}[theorem]{Definition}
\newtheoremstyle{remarkstyle}
{}
{10 mm}
{}
{}%
{\bfseries}
{}
{  }
{}%
\theoremstyle{remarkstyle} { \newtheorem{remark}[theorem]{Remark} } 
\begin{document}

\begin{flushright}
{\tt UUITP-23/16}\\
\end{flushright}
\vspace{1cm}

\title[Multiple elliptic gamma functions associated to cones]%
{Multiple elliptic gamma functions associated to cones}

\author{Jacob Winding}
\address[Winding]{Department of Physics and Astronomy, Uppsala university,
Box 516, SE-75120 Uppsala, Sweden}
\email{jacob.winding@physics.uu.se}
\begin{abstract} 
We define generalizations of the multiple elliptic gamma functions and the multiple sine functions, associated to good rational cones. 
We explain how good cones are related to collections of $SL_r(\mathbb{Z})$-elements and prove that the generalized multiple sine and multiple elliptic gamma functions enjoy infinite product representations and modular properties determined by the cone. 
This generalizes the modular properties of the elliptic gamma function studied by Felder and Varchenko, and the results about the usual multiple sine and elliptic gamma functions found by Narukawa. 
 \end{abstract}
 
\maketitle
\setcounter{tocdepth}{2}

{\small {\it Keywords:} Multiple elliptic gamma function; Multiple sine function; Modularity property;  Rational polytope cone; q-shifted factorials. 
 }

\tableofcontents

\section{Introduction}

This paper deals with a particular generalization of the \emph{multiple elliptic gamma functions}, which are a family of meromorphic functions that themselves generalize the Euler gamma function. 
A well-known member of this family is the elliptic gamma function, which was first introduced by Ruijsenaars \cite{Ruijsenaars} as a solution to a difference equation involving the Jacobi theta function. 
Similar double products also appears frequently in studying statistical mechanics. 
Felder and Varchenko studied this function in \cite{FelderVarchenko}, motivated by the study of hypergeometric solutions of elliptic qKZB difference equations; and their most striking result is the modular three-term relation that the elliptic gamma function satisfies.
Nishizawa \cite{Nishizawa} then constructed a family of meromorphic functions called the multiple elliptic gamma functions, which include the Jacobi theta function as well as the elliptic gamma function. 
We denote these as $G_r(z| \tau_0, \ldots, \tau_r)$ and they can be represented as the following infinite product 
\[
	G_r ( z | \tau_0, \ldots, \tau_r) = \prod_{n_0,\ldots,n_r = 0}^{\infty} \left (1-e^{2\pi i (- z + (n_0+1)\tau_0 + \ldots +(n_r+1)\tau_r ) } \right ) \left ( 1 - e^{2\pi i (z+n_0\tau_0 + \ldots + n_r \tau_r ) } \right )^{(-1)^r }, 
\]
when $\mathrm{Im}~ \tau_j > 0 \ \forall j$.
The Jacobi theta function corresponds to $G_0$ and the elliptic gamma function to $G_1$. 
It was subsequently shown by Narukawa \cite{Narukawa} that all the functions in this family enjoy modular properties
\[ \begin{split}
	G_r ( z | \underline \tau) =& \exp \left \{ \frac{2\pi i }{(r+2)!} B_{r+2,r+2} ( z | (\underline \tau,-1 ) ) \right \} \nn \\
	&\times  \prod_{k=0}^r G_r \left ( \frac{z}{\tau_k} \right | \left. \frac{\tau_0}{\tau_k},\ldots,\widecheck{ \frac{\tau_k}{\tau_k} }, \ldots, \frac{\tau_r}{\tau_k}, - \frac{1}{\tau_k} \right ) ,
	\end{split}
\]
where $B_{r+2,r+2}$ is a multiple Bernoulli polynomial, and $\check \ $ means we exclude that argument.

In this paper, we consider a generalization of the multiple elliptic gamma functions where we take a strongly convex rational cone $C$ of dimension $r+1$ and consider the function
\[
	G_r^C ( z | \tau_0, \ldots, \tau_r) = \prod_{n \in C^\circ \cap \mathbb{Z}^{r+1}} \left (1-e^{2\pi i (- z + n_0\tau_0 + \ldots +n_r \tau_r ) } \right )  \prod_{n \in C \cap \mathbb{Z}^{r+1}}\left ( 1 - e^{2\pi i (z+n_0\tau_0 + \ldots + n_r \tau_r ) } \right )^{(-1)^r }, 
\]
when $\mathrm{Im} \underline \tau \in \check C^\circ$, where $\check C$ is the dual cone of $C$ and $C^\circ$ means the interior of $C$. 
We call this the generalized multiple elliptic gamma function associated with $C$. 
The motivation for this definition comes from physics, where this special function appears when computing instanton partition functions for gauge theories placed on the toric manifold whose moment map cone is $C$. 
This gives some hints that these functions might be related to invariants of toric manifolds, which is interesting and motivates our studies. 
In general the connections between special function with some modular properties and topological invariants seems like a worthwhile area of investigation.
We will explain a bit more of the connection with physics later in this introduction, but first we will state the main results of the paper.
As is explained in detail in section \ref{sec:goodcones}, a good cone $C$ of dimension $r$ defines a set of $SL_{r}(\mathbb{Z})$ elements, in particular we can choose an element $\tilde K_\rho$ for each generating ray $\rho$ of $C$. Then the ordinary multiple elliptic gamma functions satisfy the following modular property
\[
	\prod_{\rho \in \Delta_1^C} G_{r-2} ( \frac{z}{ (\tilde K_\rho \underline \tau)_1 } | \frac{ (\tilde K_\rho \underline \tau )_2 } { (\tilde K_\rho \underline \tau)_1 }, \ldots, \frac{ (\tilde K_\rho \underline \tau )_{r} } { (\tilde K_\rho \underline \tau)_1 } )
	= \exp \left [ - \frac{2\pi i }{r!} B_{r,r}^C ( z | \underline \tau ) \right ] ,
\]
where $B_{r,r}^C$ is a generalized Bernoulli polynomial associated to $C$ which we will define below, and $\Delta_1^C$ is the set of generators of $C$. 
This identity include the usual modular properties of $G_{r-2}$ if we choose $C = \mathbb{R}^r_{\geq 0}$. 
Further we show that our generalized multiple elliptic gamma function also enjoy another ``modular'' or perhaps more precisely factorization property, 
\[
	G_{r-1}^C ( z | \underline \tau ) = \exp \left [ \frac{2\pi i }{ (r+1)!} B_{r+1,r+1}^{\hat C} ( z | \underline \tau,-1)\right ] \prod_{\rho \in\Delta^C_1} (SK_\rho)^* G_{r-1} (z|\underline \tau)
\]
where now $K_\rho, S \in SL_{r+1}(\mathbb{Z})$ and they are acting on the parameters $(z|\underline \tau)$ as a fractional linear transformation; for details of their definition and group action, see section \ref{sec:goodcones}.

Additionally we introduce and study a closely related family of functions, the multiple sine functions. 
This is another family of meromorphic functions, denoted $S_r$, $r = 1,2,\ldots$, that generalizes the ordinary sine function (which correspond to $S_1$) and are the classical analogs of the multiple elliptic gamma functions.
The second member of the family, the double sine $S_2$ was first introduced by Shintani \cite{Shintani}; it was also independently introduced in mathematical physics by Faddeev \cite{Faddeev:1993rs} under the name quantum dilogarithm. 
The full family was later defined by Kurokawa \cite{multiplesines}, and Narukawa \cite{Narukawa} then gave them integral representations as well as proved some interesting infinite product representations.
We again define a generalization of the multiple sine functions associated to cones, derive an integral representation and finally prove a similar infinite product representation, where we again find one factor for each generator of the cone $C$.

We will now explain something about the motivation from physics. 
When computing the partition function of a supersymmetric gauge theory placed on a sphere, using so called localization \cite{Pestun:2007rz}, which is essentially the famous Atiyah-Bott fixed point theorem applied to the infinite dimensional setting of quantum field theory, one finds that the results are naturally written in terms of the multiple sine function. 
To be precise, the multiple sine function gives us the perturbative result, which is the leading order result when expanding as a sum over instanton contributions; i.e. the contribution from zero instantons. 
In the abelian theory, gauge group $U(1)$, in 5d, we can also find the full answer, which turns out to be naturally written in terms of the multiple elliptic gamma functions instead \cite{Lockhart:2012vp}. 
Next, we can place the same gauge theories on a much larger class of manifolds: namely any Sasaki-Einstein manifold \cite{Kallen:2012cs}. 
In practice we can however only compute the partition function if the manifold is toric; since we need $U(1)$-actions to apply the localization theorem. 
And when we compute the perturbative result for a toric 5d Sasaki-Einstein manifold, we find that it's built up of the generalized multiple sine that we study in this paper \cite{Qiu:2013aga,Qiu:2014oqa}.  
Again for the abelian gauge theory, we can also compute the full instanton partition function, and we find that it consists of our generalized multiple elliptic gamma function \cite{Qiu:2015rwp}.
So this is what lead us to define these functions and study their properties. 
Their origin also imply that they might have close connections with topological invariants of the associated Sasaki-Einstein manifold and its Calabi-Yau cone: it would be interesting to connect them with for example the Donaldson-Thomas invariants.

The rest of this article is organized as follows. 
In the next part, we introduce the necessary preliminaries, defining the usual multiple elliptic gamma functions and multiple sines and state their most important properties.
In the following section, we recall some facts about cones, review their connection with toric geometry, and establish technology for summing over lattice points in a given cone.
Next, we define our generalized multiple sine functions, give an integral representation of them and prove a factorization property they enjoy, which leads to a generalized modularity property for the usual multiple elliptic gamma functions. 
In the final section, we define our generalized multiple elliptic gamma functions, write an integral representation and prove their modular property quoted above.

{\bf Remark:} This paper is a follow-up of an earlier unpublished manuscript \cite{Tizzano:2014roa} by the present author together with Luigi Tizzano, which introduced the same functions, but where we only managed to prove the results in dimensions $r=2,3$ using some cumbersome combinatorial arguments. 
The present note uses more general methods to generalize to any dimension, and also provides integral representations and proves some new properties of these functions.  
\\\\
{\bf Acknowledgments:} The author wish to express his deep gratitude to Jian Qiu and Maxim Zabzine for a lot of useful discussions and suggestions. He also thanks Konstantina Polydorou for discussions.
The author is supported in part by Vetenskapsr\aa det under grants \#2011-5079 and \#2014-5517, in part by the STINT grant and in part by the Knut and Alice Wallenberg Foundation.

\section{Multiple elliptic gamma functions and multiple sines}

In this section we introduce notation and define the well known multiple sine and multiple elliptic gamma functions, which we will later generalize.

\subsection{Multiple q-shifted factorials}
The multiple $q$-shifted factorials was introduced with two parameters in the early works of F.H. Jackson \cite{Jackson1}, and with a general number of arguments we define it as follows. 
Let $x= e^{2\pi i z }$ and $q_j = e^{2\pi i \tau_j}$ for $z\in\comp$ and $\tau_j \in \comp - \mathbb{R}$, $j=0,\ldots,r$. 
Further let
\[
	\begin{split}
		\underline q &= (q_0,\ldots,q_r), \\
		\underline q^{-}(j) &= (q_0, \ldots, q_{j-1},q_{j+1},\ldots,q_r), \\
		\underline q [j] &= (q_0,\ldots,q_j^{-1},\ldots,q_r ) , \\
		\underline q^{-1}&= (q_0^{-1},\ldots,q_r^{-1}). 
	\end{split}
\]
With these conventions in place, we define the q-shifted factorials.
Their definition depends on the sign of the imaginary part of the $\tau_j$ parameters. 
If $\im \tau_j < 0$, $j=0,\ldots,k-1$ and $\im \tau_j > 0$ for $j=k,\ldots,r$, we define the corresponding q-factorial as 
\be
	(x|\underline q)_\infty = \prod_{j_0, \cdots, j_r = 0}^{\infty} \left (1 - x q_0^{-(j_0+1)} \cdots q_{k-1}^{-(j_{k-1}+1)} q_{k}^{j_{k}}\cdots q_r^{j_r} \right )^{(-1)^k}  . 
\ee
We define this function to be invariant under permutations of the ordering of $\underline q$; or in other words we define it so that it's symmetric in the $\underline q$ parameters, which together with the above define it for all choices of $\tau_j \in \comp - \mathbb{R}$. 
The multiple q-factorials is by this definition a meromorphic function of $z$, that satisfy the following functional equations 
\be \label{eq:qfactproperties}
	(x | \underline q )_\infty = \frac{1}{ ( q_j^{-1} x | \underline q[j] )_\infty } , \ \ \ ( q_j x | \underline q)_\infty = \frac{ ( x | \underline q )_\infty } { ( x | \underline q^{-}(j) )_\infty } . 
\ee

\subsection{Multiple elliptic gamma functions}
We also use the notations
\[
	\begin{split}
		\underline \tau &= (\tau_0,\ldots,\tau_r), \\
		\underline \tau^{-}(j) &= (\tau_0, \ldots, \tau_{j-1},\tau_{j+1},\ldots,\tau_r), \\
		\underline \tau [j] &= (\tau_0,\ldots,-\tau_j,\ldots,\tau_r ) , \\
		| \underline \tau | & = \tau_0+ \tau_1+\ldots+\tau_r, 
	\end{split}
\]
and define the multiple elliptic gamma function as
\bea
	G_r ( z | \underline \tau ) &=& ( x^{-1} q_0 \ldots q_r | \underline q )_\infty [ ( x | \underline q )_\infty ]^{(-1)^r} \\
	&=& ( x^{-1} | \underline q^{-1} )^{(-1)^{r+1} } ( x | \underline q )^{(-1)^r } \label{eq:G2def2} .
\eea
The $G_r$ functions form a hierarchy that include the theta function $\theta_0(z,\tau)  = G_0 ( z | \tau)$, and the elliptic gamma function $\Gamma(z,\tau,\sigma) = G_1 ( z | \tau,\sigma)$. 
The multiple elliptic gamma functions satisfy a number of functional relations including 
\bea
	G_{r}(z+1 | \underline \tau ) &=& G_r(z|\underline \tau), \\
	G_r (z+\tau_j | \underline \tau ) &=& G_{r-1} (z|\tau^{-}(j) ) G_r(z|\underline \tau), \\
	G_r(z|\underline \tau ) &=& \frac{1}{G_r ( z - \tau_j | \underline \tau [j] )},
\eea
which follow from the relations \eqref{eq:qfactproperties}. 
They also satisfy an interesting modular property, proven by Narukawa \cite{Narukawa}, 
\begin{theorem}[Modular properties of $G_r(z|\underline \tau)$ ] \label{thm:GrModularity}
If $r \geq 2$ and $\mathrm{Im} \frac{\tau_j}{\tau_k} \neq 0$, then the multiple elliptic gamma function satisfies the identities
\bea \label{eq:GrModularity}
	G_r ( z | \underline \tau) &=& \exp \left \{ \frac{2\pi i }{(r+2)!} B_{r+2,r+2} ( z | (\underline \tau,-1 ) ) \right \} \nn \\
	&&\times  \prod_{k=0}^r G_r \left ( \frac{z}{\tau_k} \right | \left. \frac{\tau_0}{\tau_k},\ldots,\widecheck{ \frac{\tau_k}{\tau_k} }, \ldots, \frac{\tau_r}{\tau_k}, - \frac{1}{\tau_k} \right ) \\
	&=& \exp \left \{ - \frac{2\pi i }{(r+2)!} B_{r+2,r+2} ( z | (\underline \tau,1 ) ) \right \} \nn \\
	&& \times  \prod_{k=0}^r G_r \left (- \frac{z}{\tau_k} \right | \left. - \frac{\tau_0}{\tau_k},\ldots,-\widecheck{\frac{\tau_k}{\tau_k}}, \ldots, - \frac{\tau_r}{\tau_k}, - \frac{1}{\tau_k} \right ).
\eea
\end{theorem}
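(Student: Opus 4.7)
The plan is to prove the identity via the contour-integral technique pioneered by Narukawa. First I would establish that, on an appropriate open domain of parameters (e.g.\ $\mathrm{Im}\,\tau_j>0$ for all $j$), $\log G_r(z|\underline\tau)$ admits a representation of the shape
\[
\log G_r(z|\underline\tau)=\int_{L}\frac{e^{2\pi i z t}}{\prod_{j=0}^{r}(1-e^{2\pi i\tau_j t})}\,\frac{dt}{t},
\]
where $L$ runs along the real axis with a small indentation around $t=0$. This is obtained directly from the infinite-product definition by expanding each factor $(1-e^{2\pi i\tau_j t})^{-1}$ as a geometric series and matching term-by-term with the product over $(n_0,\dots,n_r)\in\mathbb{Z}_{\geq 0}^{r+1}$; a standard convergence check pins down the precise indentation and domain of validity.

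The heart of the proof is then a contour deformation: one pushes $L$ off to infinity in a direction tailored to the signs of $\mathrm{Im}(\tau_j/\tau_k)$, collecting residues along the way. The integrand has simple poles at $t=n/\tau_k$ for each $k=0,\dots,r$ and $n\in\mathbb{Z}\setminus\{0\}$, together with a pole of order $r+2$ at $t=0$. At $t=n/\tau_k$, the factor $(1-e^{2\pi i\tau_k t})^{-1}$ produces the simple pole while the remaining factors $(1-e^{2\pi i\tau_j t})^{-1}$ evaluate to $(1-e^{2\pi i n \tau_j/\tau_k})^{-1}$, and summing the residues over $n\in\mathbb{Z}\setminus\{0\}$ reassembles precisely into $\log G_r\!\left(\frac{z}{\tau_k}\Big|\frac{\tau_0}{\tau_k},\dots,\widecheck{\frac{\tau_k}{\tau_k}},\dots,\frac{\tau_r}{\tau_k},-\frac{1}{\tau_k}\right)$ by comparison with the same contour-integral representation applied to these rescaled arguments. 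The residue at $t=0$ is extracted via the standard generating-function definition of the multiple Bernoulli polynomials,
\[
\frac{t^{r+2}e^{z t}}{(e^{-t}-1)\prod_{j=0}^{r}(e^{\tau_j t}-1)}=\sum_{n\geq 0}\frac{t^{n}}{n!}\,B_{n,r+2}(z|(\underline\tau,-1)),
\]
and yields exactly the prefactor $\exp\!\bigl\{\tfrac{2\pi i}{(r+2)!}B_{r+2,r+2}(z|(\underline\tau,-1))\bigr\}$ of \eqref{eq:GrModularity}. The second form follows either by running the same argument with the contour deformed in the opposite half plane, or by applying the substitution $z\mapsto -z$, $\underline\tau\mapsto-\underline\tau$ to the first form and invoking the symmetry that reads off \eqref{eq:G2def2}.

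The main obstacle will be the residue analysis in the middle step. One must carefully verify that the residue sums at $t=n/\tau_k$ are (at worst conditionally) convergent in a consistent summation order, identify them with a $\log G_r$ after keeping track of signs, branches of the logarithm and the different half planes in which the series converge, and justify the deformation of $L$ to infinity by an estimate on the decay of the integrand away from the poles. Once the identity is established on the restricted domain where the integral representation is valid, it extends to all $\underline\tau$ satisfying $\mathrm{Im}(\tau_j/\tau_k)\neq 0$ by analytic continuation, since both sides of \eqref{eq:GrModularity} are meromorphic in $\underline\tau$ throughout this domain.
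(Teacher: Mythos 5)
Your overall strategy --- an integral representation of $\log G_r$, deformation of the contour, residue families at $t=n/\tau_k$ reassembling into the transformed gamma functions, and the pole at the origin producing the Bernoulli prefactor --- is exactly the method behind this theorem. It is Narukawa's argument; the paper itself only quotes the result from \cite{Narukawa}, and then reproves a cone-generalization of it by the same technique (integral representation in Proposition \ref{prop:SrCintegral}, residue computation in Theorem \ref{thm:SrCproductrep}, and the comparison of the two half-plane closures in the proposition that follows, which reduces to the present statement for the standard cone with $\underline\omega=(\underline\tau,-1)$).

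However, the integral representation you start from is wrong, and the error is not cosmetic. The integrand
\[
\frac{e^{2\pi i z t}}{t\,\prod_{j=0}^{r}\bigl(1-e^{2\pi i\tau_j t}\bigr)}
\]
is missing a factor of $\bigl(1-e^{\pm 2\pi i t}\bigr)^{-1}$, i.e.\ the ``extra modulus'' corresponding to the entry $-1$ in $(\underline\tau,-1)$. That factor is what makes such a representation true at all: writing $\log G_r=-\mathrm{Li}_{r+2}(x^{-1}q_0\cdots q_r|\underline q)-(-1)^r\,\mathrm{Li}_{r+2}(x|\underline q)$ with $\mathrm{Li}_{r+2}(x|\underline q)=\sum_{n\ge1}\frac{x^n}{n\prod_j(1-q_j^n)}$, the series over $n$ is encoded as a contour integral precisely through the poles of $(1-e^{2\pi i t})^{-1}$ at the positive integers, with a contour of type $C_1$ wrapping the positive real axis rather than the indented real line. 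Without it, the subsequent steps fail in three ways. First, the residue sum at $t=n/\tau_k$ gives $\sum_{n}\frac{x_k^{\,n}}{n\prod_{j\neq k}(1-e^{2\pi i n\tau_j/\tau_k})}$, a $q$-polylogarithm in only $r$ moduli; it assembles into a single $q$-shifted factorial of the type entering $G_{r-1}$, not the full $G_r\!\left(\frac{z}{\tau_k}\big|\frac{\tau_0}{\tau_k},\dots,-\frac{1}{\tau_k}\right)$ with its $r+1$ moduli including $-\frac{1}{\tau_k}$. Second, the residue at $t=0$ of your integrand is governed by $t^{r+1}e^{zt}/\prod_j(1-e^{\tau_j t})$ and produces an $(r+1)$-fold Bernoulli polynomial $B_{r+1,r+1}(z|\underline\tau)$, not $B_{r+2,r+2}(z|(\underline\tau,-1))$; indeed the generating function you quote for this step does contain the factor $(e^{-t}-1)^{-1}$, inconsistently with your integrand. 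Third, obtaining both identities requires the (two-term) integrand to be odd in $t$ so the contour can be folded onto $\mathbb{R}\pm i0$ and closed in either half-plane, and this parity is again supplied by the missing factor. With the corrected integrand, a convergence lemma justifying closure of the contour along a sequence of horizontal lines avoiding the poles (for $z$ in a suitable open set, followed by analytic continuation), your outline becomes the standard proof.
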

This result includes the famous modular property of the Jacobi theta function, as well as the modular property studied by Felder and Varchenko \cite{FelderVarchenko}. Here, $B_{r+2,r+2}$ is a multiple Bernoulli polynomial, which we introduce below in section \ref{sec:bernoullipoly}.

\subsection{Multiple sine functions}
Next, we introduce the multiple sine functions, which we denote $S_r$.
These form a hierarchy that include the ordinary sine as the first case $(S_1)$, and they are closely related to the multiple elliptic gamma functions.

To define them, let $\go_1,\ldots,\go_r \in \comp$ be chosen so that they all lie on the same side of some straight line through the origin.  
If this is the case, we can define the multiple zeta function as the series
\be
	\zeta_r ( s,z | \underline \go ) = \sum_{n \in \mathbb{Z}^r_{\geq 0} } \frac{1}{ ( z + n\cdot \underline \go )^s },
\ee
for $z \in \comp$ and $\re s > r$, where the exponential is rendered one-valued. 
This function is holomorphic in this domain and can be analytically continued to $s\in\comp$. 
We use this fact to define the multiple gamma function, following Barnes \cite{Barnes},
\be
	\Gamma_r ( z | \underline \go )  = \exp \left ( \left. \frac{\partial}{\partial s } \zeta_r ( s,z|\underline \go ) \right |_{s=0} \right ).
\ee
Finally the multiple sine is defined in terms of this gamma as
\be
	S_r ( z | \underline \go ) = \Gamma_r ( z | \underline \go )^{-1} \Gamma_r (| \underline \go | - z | \underline \go)^{(-1)^r }.
\ee
This construction is an example of the so called zeta regularization of an infinite product, see for example \cite{zetaregularization} for a discussion.
We can think of this definition as a way to make sense of the following divergent infinite product: 
\[
	\prod_{n\in\mathbb{Z}^r_{\geq 0}} ( z + n\cdot \underline \go ) \prod_{m \in \mathbb{Z}_{>0}^r } ( -z + m\cdot \underline \go )^{(-1)^{r-1}},
\]
which is the Weirstrass representation of the function. 
As stated in \cite{zetaregularization}, another way of making sense of this product is the following expression
\[
	e^{ Q_r ( z ) } \prod_{ \substack{ 
	n \in \mathbb{Z}^r_{\geq 0} \\
	n \neq ( 0, 0,\ldots, 0) } }	
	( 1 - \frac{z}{n\cdot \underline \go} )\exp \left [ \sum_{j = 1}^r \frac{1}{j} \left ( \frac{z}{n\cdot \underline \go } \right ) \right ] 
	\left \{  \prod_{m \in \mathbb{Z}^r_{>0} } ( 1 - \frac{z}{m \cdot \underline \go } ) \exp \left [ \sum_{j = 1}^r \frac{1}{j} \left ( \frac{z}{m\cdot \underline \go } \right ) \right ] \right \}^{(-1)^{r-1}} ,
\]
where $Q_r(z)$ is some polynomial function. 

The multiple sine function has an interesting infinite product representation, proved in general by Narukawa \cite{Narukawa}:
\begin{proposition} \label{prop:SrProductRep}
If $r\geq 2$ and $\mathrm{Im} \frac{ \go_j}{\go_k} \neq 0$, then $S_r$ has the following infinite product representations. 
\bea \label{eq:SrInfiniteProd}
	S_r ( z | \underline \go ) &=& \exp \left [ (-1)^r \frac{\pi i }{r!} B_{r,r} ( z | \underline \go )  \right ] \prod_{k=1}^r ( x_k | \underline q_k )_\infty  \\
	&=&  \exp \left [ (-1)^{r+1} \frac{\pi i }{r!} B_{r,r} ( z | \underline \go )  \right ] \prod_{k=1}^r ( x^{-1}_k | \underline q^{-1}_k )_\infty
\eea 
where $x_k = e^{2\pi i z /\go_k}$ and $\underline q_k = ( e^{2\pi i \go_1 / \go_k },\ldots, \widecheck { e^{2\pi i \go_k/\go_k} } , \ldots , e^{2\pi i \go_r / \go_k } )$.
\end{proposition}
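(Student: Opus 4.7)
The plan is to derive an integral representation for $\log S_r(z|\underline{\go})$ and then evaluate it by contour deformation. A residue at the origin will produce the Bernoulli prefactor, while residues along the rays $t = 2\pi i n/\go_k$ for $n \in \mathbb{Z}\setminus\{0\}$ and $k = 1,\ldots,r$ will assemble into the product of q-shifted factorials.

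Concretely, starting from the definition of $S_r$ through the multiple zeta function $\zeta_r(s,z|\underline{\go})$, one applies a Mellin transform to $\partial_s \zeta_r(s,z|\underline{\go})\big|_{s=0}$ and combines the two $\Gamma_r$ factors making up $S_r$ to obtain an expression of the form
\[
\log S_r(z|\underline{\go}) \;=\; (-1)^r \frac{\pi i}{r!}\, B_{r,r}(z|\underline{\go}) \;+\; \int_{C} \frac{e^{zt}}{\prod_{j=1}^r (e^{\go_j t}-1)} \frac{dt}{t},
\]
with $C$ a Hankel-type contour hugging the negative real axis. The polynomial term comes from the constant in the Laurent expansion of the integrand at the origin; identifying this constant with $B_{r,r}$ is the standard multiple Bernoulli computation going back to Shintani and Barnes.

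The crucial step is then to deform $C$ outward, past the poles at $t = 2\pi i n/\go_k$ for $n \neq 0$ and $k = 1,\ldots,r$. Under the hypothesis $\mathrm{Im}(\go_j/\go_k)\neq 0$ these poles are all simple and distinct, since a coincidence $2\pi i n/\go_k = 2\pi i m/\go_j$ would force $\go_j/\go_k = m/n \in \mathbb{Q}$, contradicting the hypothesis. Up to signs, the residue at $t = 2\pi i n/\go_k$ equals $\frac{1}{n}\, x_k^n \prod_{j\neq k}(1 - e^{2\pi i n\go_j/\go_k})^{-1}$; expanding each $(1-Q)^{-1}$ as a geometric series (the direction of expansion set by the sign of $\mathrm{Im}(\go_j/\go_k)$) and summing over $n \geq 1$ reproduces precisely $\log(x_k|\underline{q}_k)_\infty$. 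The main obstacle is to justify this contour manipulation rigorously: the large-arc contributions must be shown to vanish and the resulting multiple series must converge absolutely. This is where the hypothesis that the $\go_j$ lie on one side of a line through the origin is essential, as it provides the decay of the integrand needed to close the contour at infinity.

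The second form of the product representation follows from the first by combining a transformation of the q-factorial analogous to the rewriting (\ref{eq:G2def2}) of $G_r$ in terms of $(x^{-1}|\underline{q}^{-1})_\infty$, the symmetry $B_{r,r}(|\underline{\go}|-z|\underline{\go}) = (-1)^r B_{r,r}(z|\underline{\go})$ of the multiple Bernoulli polynomial, and the reflection $S_r(|\underline{\go}|-z|\underline{\go}) = S_r(z|\underline{\go})^{(-1)^{r-1}}$ of the multiple sine, both of which follow directly from the definitions.
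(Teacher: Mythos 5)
Your proposal is correct and follows essentially the same route as the paper's argument (the paper itself cites Narukawa for this proposition, but its proof of the generalized version, Theorem \ref{thm:SrCproductrep}, proceeds by exactly this method): a Barnes-type integral representation in which the $B_{r,r}$ term arises from the pole at the origin, a contour-closing step whose justification rests on the hypothesis that the $\go_j$ lie on one side of a line, and residues at $t=2\pi i n/\go_k$ that sum to $-\li_r(x_k|\underline q_k)$ and exponentiate to $(x_k|\underline q_k)_\infty$ via Lemma \ref{lem:polylogexp}. The only cosmetic difference is that you obtain the second product form from the first using the reflection $S_r(|\underline\go|-z\,|\,\underline\go)=S_r(z|\underline\go)^{(-1)^{r-1}}$ together with the Bernoulli symmetry, whereas the paper instead closes the contour around the opposite half-plane; both derivations are valid.
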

We will find a very similar property for our generalized multiple sine functions.

\subsection{Generalized $q$-polylogarithms}
When we prove the infinite product representation of the generalized multiple sine function, we need the following function and a related lemma, both which are given in Narukawa \cite{Narukawa}.
Let $x = e^{2\pi i z }$ and $q_j = e^{2\pi i \tau_j}$. 
When $\im z > 0 $ and $\im \tau_j \neq 0$ $(j=0,\ldots, r)$, the generalized q-polylogarithm is defined as 
\be
	\li_{r+2} ( x | \underline q ) = \sum_{n=1}^\infty \frac{x^n}{n \prod_{j=0}^r ( 1 - q^n_j ) } . 
\ee
This series converges absolutely and $\li_{r+2}$ is holomorphic in $z$. 
We also need the following lemma:
\begin{lemma} \label{lem:polylogexp}
	If $x$ and $\underline q$ are as defined above, then
	\be
		(x | \underline q )_\infty = \exp [ - \li_{r+2} ( x | \underline q ) ].
	\ee
\end{lemma}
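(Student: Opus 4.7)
The plan is to expand both sides as absolutely convergent series and match them term by term. Since both $(x|\underline q)_\infty$ and $\li_{r+2}(x|\underline q)$ are symmetric in the $\tau_j$ (the latter trivially, by the definition in the excerpt), it suffices to fix an ordering in which $\im\tau_j<0$ for $j<k$ and $\im\tau_j>0$ for $j\ge k$, so that the explicit infinite product representation of $(x|\underline q)_\infty$ from the excerpt applies.

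First I would check that, under the hypotheses $\im z>0$ and $\im\tau_j\ne 0$, each factor in that infinite product is of the form $1-y_{\vec j}$ with $|y_{\vec j}|<1$. Indeed, setting
\[
 y_{\vec j} \;=\; x\, q_0^{-(j_0+1)}\cdots q_{k-1}^{-(j_{k-1}+1)}\, q_k^{j_k}\cdots q_r^{j_r},
\]
one has $|q_\ell^{-1}|<1$ for $\ell<k$ and $|q_\ell|<1$ for $\ell\ge k$, together with $|x|<1$, so $|y_{\vec j}|\le |x|\prod_{\ell<k}|q_\ell|^{-1}<1$ and the bound is uniform enough (geometrically small in each $j_\ell$) to guarantee absolute convergence of the subsequent double series.

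Second, I would take the logarithm of the product and expand $\log(1-y_{\vec j})=-\sum_{n\ge 1}y_{\vec j}^{\,n}/n$. By absolute convergence I can interchange the sum over $\vec j$ with the sum over $n$, and for each fixed $n$ evaluate $r+1$ independent geometric series. For $\ell<k$ this yields
\[
 \sum_{j_\ell\ge 0} q_\ell^{-n(j_\ell+1)} \;=\; \frac{q_\ell^{-n}}{1-q_\ell^{-n}} \;=\; -\frac{1}{1-q_\ell^n},
\]
and for $\ell\ge k$ it yields $1/(1-q_\ell^n)$. Combining these with the overall sign $(-1)^k$ in the defining product of $(x|\underline q)_\infty$ and the overall $-1$ from $\log(1-y)$, the $k$ minus signs from the first kind of factor collect into $(-1)^k$, which cancels the $(-1)^k$ out front, leaving
\[
 \log(x|\underline q)_\infty \;=\; -\sum_{n=1}^{\infty}\frac{x^n}{n\prod_{j=0}^{r}(1-q_j^n)} \;=\; -\li_{r+2}(x|\underline q).
\]
Exponentiating gives the claim.

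The only real issue is the justification of interchanging the two summations; this is handled by the uniform bound on $|y_{\vec j}|$ established in the first step, which dominates the double series by a convergent geometric series in all indices. Everything else is a routine computation with geometric sums and sign bookkeeping, and the result is visibly symmetric in $\underline\tau$, so it extends from the assumed sign pattern to all permitted $\underline\tau$ by the symmetry convention for the multiple $q$-factorial.
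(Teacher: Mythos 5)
Your proof is correct. The paper itself does not prove this lemma --- it simply cites Narukawa --- and your argument (take the logarithm of the defining product, expand $\log(1-y)=-\sum_n y^n/n$, justify the interchange of summations by the uniform bound $|y_{\vec j}|\le |x|\prod_{\ell<k}|q_\ell|^{-1}<1$, evaluate the geometric series so that the $k$ factors $-1/(1-q_\ell^n)$ cancel the overall exponent $(-1)^k$, and invoke the symmetry of both sides in $\underline q$ to reduce to a fixed sign pattern) is exactly the standard computation that establishes the cited result.
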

\begin{proof}
See Narukawa \cite{Narukawa}. 
\end{proof}

\subsection{The multiple Bernoulli polynomials} \label{sec:bernoullipoly}
For $z \in \comp$ and $\go_j \in \comp-\{ 0 \}$ we define the multiple Bernoulli polynomials $B_{r,n}$ through the formal generating series
\be
	(-1)^r \frac{t^r e^{zt } } { \prod_{j=1}^r ( 1 - e^{t \go_j } ) }  =\sum_{n=0}^{\infty} B_{r,n}(z | \underline\go) \frac{t^n}{n!}.
\ee
These polynomials appear in the modular properties of elliptic gamma functions and multiple sine functions, and satisfy a number of functional relations that one can prove from their generating function, such as
\be \label{eq:bernoulliproperties}
\begin{split}
	B_{r,n} ( c z | c \underline \go ) &= c^{n-r} B_{r,n} ( z | \underline \go), \\
	B_{r,n} ( | \underline \go | - z | \underline \go ) &= (-1)^n B_{r,n} ( z | \underline \go ) , 
\end{split}
\ee	
as well as a number of additional similar properties. 

\section{Good cones}\label{sec:goodcones} 
Here we introduce some basic definitions and terms about convex cones that we will need, mostly following the conventions from toric geometry \cite{Fulton}. 
\begin{definition}
A \emph{convex polyhedral cone} is a set
\be
	C = \{ r_1 m_1 + \ldots + r_n m_n \in \mathbb{R}^r | r_i \geq 0 \}
\ee
generated by any finite set of vectors $m_1,\ldots,m_n \in \mathbb{R}^r$. 
These vectors, or sometimes the corresponding rays, are called the \emph{generators} of the cone.  
\end{definition}
A cone is called \emph{rational} if its generators can be taken in $\mathbb{Z}^r$, and when this is the case we can assume that the generators are primitive, i.e. that $\gcd(m_i^1,\ldots,m_i^r)=1$. 
A cone is called \emph{strongly convex} if
\[
	C \cap (-C) = \{ 0 \},
\]
and we consider only cones of this type. 
We also need the concept of the dual cone $\check C$,
\[
\check C = \{ u \in \mathbb{R}^r | u\cdot v \geq 0 \ \forall v \in C \}. 
\] 
A \emph{face} $f$ of $C$ is the intersection of $C$ with any supporting hyperplane, $f = C \cap u^{\perp} = \{ v \in C | u\cdot v = 0 \}$ for some $u \in \check C$.
All faces of $C$ will themselves be convex polyhedral cones. 
A co-dimension 1 face of of the cone is called a \emph{facet}, and a dimension 1 face is called a generator. 
We can also describe a cone in a dual description as the intersection of half-spaces, i.e. as the set
\be
	C = \{ x \in \mathbb{R}^r | x\cdot v_i \geq 0, \ i = 1, \ldots, n \}
\ee
for the set of inwards normals $v_1,\ldots, v_n$ associated with the facets of $C$. 
To each co-dimension $k$ face, there will be $k$ associated normal vectors.
We note that the dual cone $\check C$ is generated by the set of normals of $C$, and that its set of normals will be the generators of $C$. 
We also write $C^\circ$ for the interior of a cone, i.e. $C^\circ = C - \partial C$. 
 \begin{remark}
 From now on, when we  write cone we mean a strongly convex rational cone. 
 \end{remark}

Further, we are interested in cones with a particular property, namely that of being \emph{good}, which is defined as:
\begin{definition}
	A cone $C$ of dimension $r$ is \emph{good} if at every codimension $k$ face, the associated $k$ normals $v_{i_1},\ldots,v_{i_k}$ satisfy 
	\be
		\mathrm{Span}_{\mathbb{R}} \bra v_{i_k},\ldots,v_{i_k} \ket \cap \mathbb{Z}^n =  \mathrm{Span}_{\mathbb{Z}} \bra v_{i_1},\ldots, v_{i_k} \ket .
	\ee
\end{definition}
It is easy to see that this condition is equivalent to requiring that at every codimension $k$ face, $[ v_{i_1},\ldots, v_{i_k} ]$ regarded as an $k\times r$ matrix, can be completed into an $SL_r (\mathbb{Z})$ matrix.
This goodness condition was introduced by Lerman \cite{Lerman} in the context of contact toric geometry, where the cone appears as the image of the moment map of the torus action on a toric space, and the goodness condition corresponds to the toric space being smooth. 
So sometimes these cones are referred to as smooth cones.  
From the point of view that the functions we study have a connection to toric manifolds, it's a natural condition to impose, and as we will see in the next subsection, the goodness condition is what lets us relate cones and modular properties.

We will also use the notion of cones being \emph{standard}, by which we mean that the cone is simplicial (i.e. the number of generators is equal to the dimension of the cone) and that its normals (or generators) form an $SL_r(\mathbb{Z})$ element, implying that up to an $SL_r(\mathbb{Z})$ transformation the cone is the same as $\mathbb{R}^r_{\geq 0}$.

\subsection{Good cones, $SL_r (\mathbb{Z})$ and modular transformations}\label{sec:conemodularity}
Let $C$ be a good cone of dimension $r>2$. 
From the goodness condition, every codimension $k$ face of the cone give rise to $SL_r(\mathbb{Z})$ elements. 
In particular we care about the $SL_r (\mathbb{Z})$ elements associated to the generating rays of $C$, so let $\rho$ be such a ray generated by the vector $m$. 
Associated to $\rho$ we have a set of $r-1$ normal vectors, $v_1^\rho, \ldots, v_{r-1}^\rho$.
We choose an ordering of these normals such that 
\[
	\det [m , v_1^\rho, \ldots, v^\rho_{r-1} ] > 0 ,
\]
which exists when the dimension of the cone is larger than 2 (in which case we have no choice of ordering). 
The goodness condition then guarantees that there exist an integer vector $n^\rho\in\mathbb{Z}^r$ such that 
\[
	\det [ n^\rho ,v_1^\rho, \ldots, v^\rho_{r-1} ] = 1,
\]
where we keep the ordering of the $v^\rho_i$'s from above. 
The choice of $n^\rho$ is not unique, there will be a family of solutions parametrized by $r-1$ integers, corresponding to shifting $n^\rho$ by linear combinations of the associated normal vectors.  
So in this way we define our $SL_r(\mathbb{Z})$ element associated to a generator $\rho$ as 
\be \label{eq:Kfdef1}
\tilde K_\rho = [ n^\rho ,v_1^\rho, \ldots, v^\rho_{r-1} ]^{-1} , 
\ee
which can be thought of as mapping this ``corner'' of the cone into the standard cone $\mathbb{R}^r_{\geq 0}$. 

An alternative way of seeing why this choice of ordering is appropriate is to realize that with it, the first row of $\tilde K_\rho$ will  be equal to $m$, independently of the choice of $n^\rho$. 
The rest of the rows of $\tilde K_\rho$ can be thought of as the other generators of the cone described by the inward normals $\{ n^\rho, v_1^\rho , \ldots, v^\rho_{r-1} \}$, and they will depend on the choice of $n^\rho$ (since all other generators of this cone will have $n^f$ associated to it).

The dimension $r=2$ is different: here generators and normals are in one-to-one correspondence, so there is no freedom of choosing an appropriate ordering if we want the added line described by $n^\rho$ to be first in our matrix. 
This means that we will not be guaranteed to get an $SL_2(\mathbb{Z})$ matrix, but rather a $GL_2(\mathbb{Z})$ element (i.e. we allow the determinant to be $\pm 1$ instead of only $+1$). 
Otherwise, nothing changes and in the discussion below, for $r=2$ one just substitutes $SL$ with $GL$ everywhere.

For later purposes it is natural to embed $\tilde K_\rho$ trivially into $SL_{r+1}(\mathbb{Z})$, by defining 
\be
	K_\rho = \begin{pmatrix}
		\tilde K_\rho & 0 \\
		0 & 1 
	\end{pmatrix}. 
\ee
$K_\rho$ then acts naturally as a modular (fractional linear) transformation on a set of parameters $(z|\tau_1,\cdots,\tau_{r})$.
Explicitly we consider $\underline \tau = (\tau_1,\cdots,\tau_{r})$ as homogenous coordinates in $\mathbb{P}^r$, i.e. we extend them into $\tilde \tau = (\underline\tau, 1)$ and then an element $g \in SL_{r+1}(\mathbb{Z})$ acts on $(z| \underline \tau)$ as
\be \label{eq:groupaction1}
	g\cdot (z | \underline \tau) = \left ( \frac{z}{(g\tilde \tau)_{r+1} } | \frac{(g\tilde\tau)_1}{(g\tilde \tau)_{r+1} }, \cdots, \frac{(g\tilde\tau)_r}{(g\tilde \tau)_{r+1} } \right ).
\ee
This group action, combined with one additional element called $S$ defined below, with group elements $K_\rho$ defined from the cone, will describe the modular properties of the generalized multiple gamma functions.
The element $S \in SL_{r+1}(\mathbb{Z})$ that we use is given by, for $r>1$: 
\be \label{eq:Sdefinition}
	S = \begin{pmatrix}
	0 & 0 & -1 \\ 
	0 & \mathds{1}_{r-1} & 0 \\
	1 & 0 & 0 
	\end{pmatrix}, \ \ \ \ 
	S^{-1} =  \begin{pmatrix}
	0 & \ldots & 1 \\ 
	0 & \mathds{1}_{r-1} & 0 \\
	-1 & 0 & 0 
	\end{pmatrix},
\ee
which is to be thought of as some sort of ``S-duality'' element, generalizing the usual $S$-element of $SL_2(\mathbb{Z})$.  

\subsection{Cone subdivision}
In the next section we will discuss sums over lattice points inside cones, and for this we need to know that a certain useful subdivision of the cone exists. 
To be precise, we need to be able to subdivide a given $r$-dimensional cone into simplicial standard cones. 
This is a fairly standard result from the study of toric varieties, where the existence of such a subdivision corresponds to the existence of a resolution of singularities, proven for example in chapter 2 of \cite{Fulton}.
For completeness we present the proof here, as well as a slight further result that we will also need. 
\begin{proposition} \label{prop:subdivision}
For any cone $C$ there exists a subdivision of $C$ into $\cup_i C_i$ where $C_i$ are standard cones. 
Further, the subdivision can be chosen so that the sub-cones form a simplicial triangulation of the base of the cone.   
\end{proposition}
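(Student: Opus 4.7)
The plan is to prove the result in two stages: first construct a simplicial subdivision of $C$ whose cones cross-section to a triangulation of a transverse base, and then refine so that every cone becomes standard while preserving this base-triangulation property. This is essentially the toric resolution-of-singularities procedure (cf.\ Fulton Ch.\ 2), specialized to the polyhedral setting.

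For the first stage, fix a linear functional $u \in \check C^\circ$, so that $u \cdot v > 0$ for every nonzero $v \in C$, and set $H := \{x : u \cdot x = 1\}$. Then $P := C \cap H$ is a compact convex polytope whose vertices are the rescaled generators $v_i/(u\cdot v_i)$ of the rays of $C$. Any simplicial triangulation $\mathcal{T}$ of $P$ with vertex set equal to these $v_i/(u\cdot v_i)$ exists (one can build it by iterated star-subdivision from a chosen vertex), and taking the cone over each simplex of $\mathcal{T}$ yields a simplicial subdivision of $C$. At this point the base-triangulation property holds tautologically.

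For the second stage I induct on the multiplicity $\mathrm{mult}(\sigma) := |\det[v_1,\ldots,v_k]|$ of a simplicial cone $\sigma$ with primitive generators $v_1,\ldots,v_k$ (computed relative to the lattice in the span of $\sigma$); $\sigma$ is standard iff $\mathrm{mult}(\sigma)=1$. Suppose $\mathrm{mult}(\sigma)>1$. Then the half-open fundamental parallelepiped $\{\sum t_i v_i : 0 \leq t_i < 1\}$ contains a nonzero lattice point $w = \sum t_i v_i$; let $v$ be the primitive integer vector along the ray $\mathbb{R}_{\geq 0} w$ and star-subdivide $\sigma$ along $v$. The crucial determinantal identity is that, after replacing $v_j$ by $v$ in the generator list, the multiplicity of the resulting sub-cone equals $t_j \cdot \mathrm{mult}(\sigma)$, which is strictly less than $\mathrm{mult}(\sigma)$ whenever $t_j > 0$; sub-cones with $t_j = 0$ are lower-dimensional and discarded. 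Since multiplicity is a positive integer that strictly decreases under each star-subdivision, iteration terminates in finitely many steps, yielding only standard cones.

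Projecting each newly added ray $\mathbb{R}_{\geq 0} v$ to $H$ corresponds precisely to a star-subdivision of the base simplex over $\sigma$ at an interior point, so the subdivision of $P$ remains a simplicial complex throughout the procedure. The main obstacle is the quantitative multiplicity-decrease step: one must verify both that the lattice point $w$ can be chosen with some $t_j$ strictly positive (which holds since $w \neq 0$) and the determinant identity $|\det[v_1,\ldots,v_{j-1},v,v_{j+1},\ldots,v_k]| = t_j \cdot |\det[v_1,\ldots,v_k]|$. Once this is in hand the rest is a straightforward bookkeeping of star-subdivisions carried out inside the already-existing simplicial subdivision.
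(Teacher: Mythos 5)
Your overall strategy is the same as the paper's: first produce a simplicial fan over a triangulation of the base, then resolve each simplicial cone by multiplicity-decreasing star subdivisions, using the determinant identity $|\det[v_1,\ldots,v_{j-1},v,v_{j+1},\ldots,v_k]| = t_j\,|\det[v_1,\ldots,v_k]|$. That part is fine. The gap is in the last step, where you assert that the newly added ray ``corresponds precisely to a star-subdivision of the base simplex over $\sigma$ at an \emph{interior} point.'' This is not true in general: the only thing you can guarantee about the lattice point $w=\sum_i t_i v_i$ in the half-open fundamental parallelepiped is that it is nonzero, i.e.\ that \emph{some} $t_j>0$; the other coordinates may vanish, in which case $w$ lies in a proper face of $\sigma$. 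A concrete instance: if $\sigma=\sigma_2\times\mathbb{R}_{\geq 0}e_3$ with $\sigma_2$ a non-standard two-dimensional cone, then every nonzero lattice point of the fundamental parallelepiped has $t_3=0$ and hence sits on the facet spanned by $v_1,v_2$, so subdividing at a boundary point is unavoidable. (You implicitly acknowledge this possibility when you discard the degenerate sub-cones with $t_j=0$.)

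When $w$ lies in a proper face $F$ of $\sigma$, star-subdividing $\sigma$ alone introduces a vertex on $F$ that is not a vertex of the neighbouring maximal simplices sharing $F$, so the base ceases to be a simplicial complex --- exactly the failure mode the second claim of the proposition is meant to exclude. The repair, which is the point the paper's proof spends its second paragraph on, is to star-subdivide the \emph{entire} complex at the new ray, i.e.\ to subdivide every maximal cone containing $F$, not just $\sigma$. This is harmless for termination: since $w$ is expressed in the generators of $F$ with the same coefficients $t_i\in[0,1)$, the same determinant identity shows that the multiplicities of the neighbouring cones do not increase either. With that amendment your argument closes; as written, the base-triangulation claim is unproved.
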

\begin{proof}
As a first step, we can subdivide $C$ into a collection of simplicial cones by subsequently adding more vectors to our set of generators, or through barycentric subdivision. 
This gives us a subdivision of $C$ into simplicial cones, and next we show that each of these can be subdivided into standard cones in the above sense.
So consider a simplicial cone $\Delta$ with generators $\{ m_1, \ldots, m_r\}$, and define its multiplicity as $\det [ m_1, \ldots, m_{r} ] \geq 1$.
If the multiplicity is 1, then the cone is standard and we are done, so assume that it is greater than 1.
Then we consider a lattice point inside $\Delta$ of the form 
\[
	 u = q_1 m_1 + \cdots + q_r m_r , \ \ q_i \in \mathbb{Q},  \ \ 0 \leq q_i < 1 . 
\]
We can assume that $u$ is primitive, and then 
\[
	\det [ m_1 , \ldots, m_{i-1} , u , m_{i+1} , \ldots, m_r ] = \det [ m_1 , \ldots, m_{i-1}, m_i , m_{i+1} , \ldots , m_r ] q_i \in \mathbb{Z} ,
\]
where the result is still an integer since $u$ is a lattice point. 
Now since $0\leq q_i < 1$, we see that the simplicial cones generated by $\{ m_1,\ldots, m_{i-1},u,m_{i+1},\ldots, m_r \}$ have smaller multiplicities than the original cone $\Delta$ for all values of $i$, which gives us our subdivision algorithm.
It's clear that the algorithm will terminate in a finite number of steps, giving us standard cones in the end. 
This proves the first statement of the proposition. 

The case that might not give us a simplicial triangulation of the base, is the case when some $q_i = 0$. 
In this case the new generator  $u$ is located inside a face $F$ of $\Delta$, which might lead to a non-simplicial division.
To avoid this, we modify the procedure so that when this happens, we subdivide not only $\Delta$, but also all cones that share the face $F$. 
This subdivision will decrease the multiplicity of the other cones as well, as seen from the above argument, so we again have a good subdivision algorithm, and it will give us a simplicial subdivision of $C$.
\end{proof}

We will also need the following lemma, which is central in our proofs of the infinite product representation of the multiple sine functions, and factorization property of the generalized multiple elliptic gamma function. 

\begin{lemma} \label{lem:rayexclusion}
Let $C$ be a $r$-dimensional cone, and let $\rho$ be a ray in $C$ not equal to any of the generating rays of $C$. 
Then we can find a simplicial subdivision of $C$ such that $\rho$ is not a generating ray of any of the cones in the subdivision. 
\end{lemma}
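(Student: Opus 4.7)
The plan is to construct a simplicial subdivision of $C$ whose $1$-dimensional sub-cones are all among the generating rays of $C$ itself. Since $\rho$ is, by hypothesis, not a generating ray of $C$, it then cannot appear as a ray of any sub-cone. This strategy works because the lemma asks only for a \emph{simplicial} subdivision, not a \emph{standard} one: the refinement algorithm of Proposition~\ref{prop:subdivision}, which through barycentric subdivision may introduce new rays, is stronger than needed here and would complicate matters.

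First I would slice $C$ with an affine hyperplane transverse to it to obtain its base $B$, a convex polytope of dimension $r-1$ whose vertices correspond bijectively to the generating rays of $C$. The problem then reduces to the classical polytopal fact that every convex polytope admits a triangulation whose vertex set is contained in its own vertex set. One explicit construction is a pulling triangulation: pick a vertex $v_0$ of $B$, inductively pull-triangulate every facet of $B$ not containing $v_0$ (using the induction hypothesis on dimension), and take the join of each resulting $(r-2)$-simplex with $v_0$. The recursive structure makes it clear that no new vertex is ever introduced.

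Finally I would cone the resulting triangulation of $B$ back from the origin to obtain the desired simplicial fan refining $C$. Each cone in this fan is simplicial because it lies over a simplex of $B$, its $1$-dimensional sub-cones form a subset of the generating rays of $C$, and in particular $\rho$ is absent. The only points requiring verification are that the pulling triangulation is a bona fide triangulation of $B$ --- its maximal simplices cover $B$ and intersect in common faces --- and that coning from the origin preserves these properties to yield a fan refining $C$. Both statements are standard in polytope and toric geometry and are not genuine obstacles; the only care needed is in tracking the inductive structure of the pulling procedure.
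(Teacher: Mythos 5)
Your construction does establish the lemma as literally worded, and the pulling (or ``placing'') triangulation of the base polytope is a clean way to get a simplicial fan using only the original generating rays. The problem is with your opening claim that ``the lemma asks only for a \emph{simplicial} subdivision, not a \emph{standard} one'' and that the refinement of Proposition~\ref{prop:subdivision} is stronger than needed. In this paper the lemma exists solely to feed the lattice-sum machinery of Section~3.3: the analytic continuation \eqref{eq:conesubdiv2}, and the pole/residue analysis in the proofs of Theorems~\ref{thm:SrCproductrep} and~\ref{thm:GrCfactorization}, all require that each sub-cone $C^k_i$ be \emph{standard} (unimodular), so that its lattice generating function is exactly $\prod_{j=1}^k (1-e^{-(A_i^k\underline\go)_j})^{-1}$ for an $SL_r(\mathbb{Z})$ element $A_i^k$. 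A pulling triangulation on the original vertex set generically produces simplicial cones of multiplicity greater than one, whose generating functions carry an extra numerator polynomial (a sum over the fundamental parallelepiped), so the subdivision you build cannot be substituted into \eqref{eq:conesubdiv2}, and the downstream argument that the poles along $\rho$ are spurious does not go through with it.

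The obvious repair --- take your triangulation and then make each cone standard by Proposition~\ref{prop:subdivision} --- reopens exactly the question you set out to avoid: that refinement inserts new rays $u=\sum_i q_i m_i$ inside the non-unimodular cones, and nothing in your argument prevents one of these from landing on $\rho$. Controlling where the new rays can appear is precisely the content of the paper's proof, which first isolates $\rho$ strictly inside a single standard cone $C_\rho$ not having $\rho$ as a generator, and then observes that all subsequent refinement only introduces rays on faces shared with the complementary cones, hence away from $\rho$. So either you must add that containment-and-refinement step to your argument (at which point your triangulation is doing no work), or you must strengthen the conclusion of your construction to ``standard'' rather than ``simplicial'' --- and the latter is false as stated. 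As written, the proposal proves a weaker statement than the one the paper actually uses.
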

\begin{proof}
For any such ray $\rho$, we start by selecting a good simplicial $r$-dimensional cone $C_\rho \subset C$ that contains $\rho$ but doesn't have it as one of its generators. 
See figure \ref{fig:rayexclusion} for an illustrative example when $r=3$. 
If $\rho$ is strictly inside $C$, i.e. not contained in any of its proper faces, we can select $C_\rho$ such that it's also strictly inside it. 
Otherwise if $\rho$ is contained a face of $C$, it obviously also need to be in a face of $C_\rho$. 
In any case, we proceed by subdividing $C - C_\rho$: a set that is not a proper cone, but that can be subdivided into a set of cones.
Each of these cones will then be further subdivided according to \ref{prop:subdivision} until we have a simplicial subdivision where all cones are standard. 
It's clear that the procedure, which might involve further subdivisions of $C_\rho$ as well as the other cones, won't ever introduce any generating ray along $\rho$, since the only new generators introduced into $C_\rho$ will lie along its faces shared with the other cones in our subdivision: and $\rho$ is either strictly inside $C_\rho$, or along a face of $C$, which isn't shared with other cones of our subdivision.  
\end{proof}

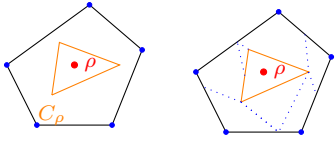
\begin{figure}
\begin{center}
\begin{tikzpicture}
\draw (0,0) -- (1,0) -- (1.4, 1) -- (0.7, 1.6) -- (-0.4, 0.8) -- (0,0);
\draw [blue,fill](-0.4, 0.8) circle (0.03);
\draw [blue,fill](0,0) circle (0.03);
\draw [blue,fill](1,0) circle (0.03);
\draw [blue,fill](1.4, 1) circle (0.03);
\draw [blue,fill](0.7, 1.6) circle (0.03);
\draw [red,fill](0.5, 0.8) circle (0.04) node[draw=none,fill=none,font=\scriptsize,right] {$\rho$};
\draw [orange](0.2, 0.4) node[draw=none,fill=none,font=\scriptsize,below] {$C_\rho$}-- (1.1, 0.8) -- ( 0.3, 1.1) -- (0.2, 0.4); 

\end{tikzpicture}
\ \ \ \ 
\begin{tikzpicture}
\draw (0,0) -- (1,0) -- (1.4, 1) -- (0.7, 1.6) -- (-0.4, 0.8) -- (0,0);
\draw [blue,fill](-0.4, 0.8) circle (0.03);
\draw [blue,fill](0,0) circle (0.03);
\draw [blue,fill](1,0) circle (0.03);
\draw [blue,fill](1.4, 1) circle (0.03);
\draw [blue,fill](0.7, 1.6) circle (0.03);
\draw [red,fill](0.5, 0.8) circle (0.04) node[draw=none,fill=none,font=\scriptsize,right] {$\rho$};
\draw [orange](0.2, 0.4) -- (1.1, 0.8) -- ( 0.3, 1.1) -- (0.2, 0.4); 
\draw [blue, dotted] (0.2, 0.4) -- (0.7, 0.0) -- (1.1, 0.8) -- (1.05, 1.3);
\draw [blue, dotted] (1.1, 0.8) -- (1.2, 0.5);
\draw [blue, dotted] (0.2, 0.4) -- (0.7, 0.0);
\draw [blue, dotted] (0.2, 0.4) -- (-.3, 0.6);
\draw [blue, dotted] (0.25, 0.75) -- (0.15, 1.2);
\draw [blue, dotted] (0.3, 1.1) -- (0.15, 1.2);
\end{tikzpicture}
\end{center}
\caption{Both figures show the polygon base of a 3d cone $C$, where in the first one we show a given ray $\rho$ represented by a point in the base, and a selected simplicial standard cone $C_\rho$ that surrounds it. 
In the next figure, we show some arbitrary subdivision of $C-C_\rho$ into convex good cones, not necessarily simplicial. 
Finally we subdivide each of these into standard cones; something not shown in this figure. }\label{fig:rayexclusion}
\end{figure}

\subsection{Summing over cones}\label{sec:conesums}
In this section, we describe some technology that we need in order to define our generalized functions.
Let $C$ be a good cone of dimension $r$, and let $\re \underline \go$ be in its dual cone.
We wish to consider for example the following sum
\be
	\sum_{n \in C \cap \mathbb{Z}^r } e^{- n \cdot \underline \go },
\ee	
and we see that it converges absolutely. 
However we want to analytically continue it so that it can be made sense of also for generic $\underline \go$ (away from some critical locus at least). 
We do this  using a simplicial subdivision of the cone to perform the sum, and from there we can define our analytical continuation.
Since the original sum is absolutely convergent, we are free to switch order of summation, which is what a simplicial subdivision amounts to, so the analytically continued result will not depend on the chosen subdivision. 
We know from our proposition \ref{prop:subdivision} that a simplicial subdivision of $C$ exists, so we let $C = \cup_i C^r_i$ be such a subdivision, where $\{ C^r_i \}$ is a collection of standard simplicial $r$-dimensional cones, and write the sum as
\[ 
	\sum_{n \in C \cap \mathbb{Z}^r } e^{- n \cdot \underline \go } = \sum_i \sum_{n\in C^r_i \cap \mathbb{Z}^r } e^{- n \cdot \underline \go }  - \{ \mathrm{overcounting} \} , 
\]	
where we realize that we have to remove the terms that are being counted twice in the first sum. 
These points are the ones that are shared among exactly two interior faces of our subdivision, so to correct our counting we need to remove them once.
These interior faces will be some collection of $r-1$ dimensional cones, call them $\{ C^{r-1}_j \}$, and we need to remove their contributions once. 
Doing this means that the faces shared among these cones will be under-counted: at a place where $n$ number of $r-1$ cones meet, $n$ of the $C^r_i$ cones will also have met, and thus we add and remove the points along these $r-2$ dimensional faces the same number of times, so we have to add them back once. 
This is true only when the cones overlap inside $C^\circ$, if they only overlap on the boundary we do not have to add anything, since when say $n$ $C^r_i$-cones overlap at the boundary, only $(n-1)$ of the $(r-1)$-dimensional cones going into the interior of $C$ will overlap: the others will be along the boundary of $C$. 
So the rule is that we only add and remove points inside cones that are not contained in the boundary of $C$. 
The pattern of adding and removing points along shared faces continues until we are left with only the $0$-dimensional face (i.e. vertex) that is shared by all the cones, namely the origin, and we can write the sum as
\be \label{eq:conesubdiv1}
	\sum_{n \in C \cap \mathbb{Z}^r } e^{- n \cdot \underline \go } 
	= \sum_{k=1}^r \sum_{i=1}^{N_k} \sum_{n\in C^{k}_i \cap \mathbb{Z}^r } (-1)^{r-k} e^{- n \cdot \underline \go }  ,
\ee
where $N_k$ is the number of cones of dimension $k$ in our subdivision. 
This is essentially nothing but the usual inclusion-exclusion principle that we apply in order to rearrange our sum into a more useful form. 

Let's show explicitly that this sum counts every lattice point in our original cone exactly once. 
So consider an arbitrary point (not equal to the origin, which we will consider separately) $p \in C$.
The simplicial subdivision of $C$ gives us a simplicial subdivision of the base polytope of the cone, which is a simplicial complex. 
Consider now the sub-complex of this, which includes only the simplices whose corresponding cones contains the point $p$, call this $\Delta_p$. 
This complex is an intersection of a number of starshaped sub-complexes, and thus a standard result tells us that it's contractible. 
However, when counting the number of times $p$ appears in our sum, we don't wish to count the boundaries of $\Delta_p$, since they are not contributing to the number of times we count $p$ in our sum. 
Thus, we consider $\Delta_p / \partial \Delta_p$, and compute its relative Euler character:
\be
	\chi(\Delta_p, \partial \Delta_p ) = \chi(\Delta_p) - \chi(\partial \Delta_p) = 1 - ( 1 - (-1)^r ) = (-1)^r 
\ee
using that $\Delta_p$ is homotopic to a point and that $\partial \Delta_p$ is homotopic to $S^{r-2}$. 
This is directly related to the contribution to our sum from the point $p$:
\[
	e^{- p\cdot \underline \go } \sum_k (-1)^{r-k} E_{k}^{\Delta_p/\partial \Delta_p } = e^{-p \cdot \underline \go } (-1)^r \chi(\Delta_p,\partial \Delta_p ) = e^{-p\cdot \underline \go } ,
\]
proving that we count each point exactly once. 

We need to consider the origin separately; but in a very similar fashion. 
The origin is shared among all the simplices, so here we are instead considering the entire complex $\Delta$.
But again in our sum, we need to exclude its boundary $\partial \Delta$; so we consider $\Delta/\partial\Delta$.
This gives the relative Euler character
\be
	\chi ( \Delta, \partial \Delta ) =  \chi(\Delta) - \chi(\partial \Delta ) = 1 - (1 - (-1)^r ) = (-1)^r . 
\ee
So again the same story as above repeats itself and we get that the origin also is counted only once.

Now because of our subdivision, each of the $C_i^r$-cones will be standard, meaning that we can find $SL_r (\mathbb{Z})$ transformations sending them to the cone $\mathbb{R}^r_{\geq 0}$. 
The same can be done for all the lower-dimensional faces as well, by further subdivisions if necessary. 
Denote by $A_i^k$ the $SL_r (\mathbb{Z})$ element that sends the cone $C_i^k$ to the standard cone $\mathbb{R}^k_{\geq 0}$. 
When acting on a lower-dimensional cone we choose this matrix such that it maps the $k$ normals into the first $k$ basis vectors of $\mathbb{R}^r$, which is what we mean by it being mapped to $\mathbb{R}^k_{\geq 0}$. 
With this, we can compute
\be \label{eq:conesubdiv2}
\begin{split}
	\sum_{k=1}^r \sum_{i=1}^{N_k} \sum_{n\in C^{k}_i \cap \mathbb{Z}^r } (-1)^{r-k} e^{- n \cdot \underline \go }  
	= \sum_{k=1}^r \sum_{i=1}^{N_k} \sum_{(A^k_i n) = m \in \mathbb{Z}^{k}_{\geq 0} } (-1)^{r-k} e^{ - m \cdot (A_i^k \underline \go ) } \\
	=  \sum_{k=1}^r \sum_{i=1}^{N_k} \frac{(-1)^{r-k}} { \prod_{j=1}^k ( 1 - e^{ - (A_i^k \underline \go)_j } ) } . 
	\end{split}
\ee
We note here that our original assumption on $\underline \go$ guarantees us that the sums actually all converge, allowing us to write down that final result.  
And this expression for the sum is now what allows us to analytically continue the sum to any generic $\underline \go$, as long as we avoid $\re (A_i^k \underline \go)_j = 0$, i.e. choose $\underline \go$ `generic enough'. 
This particular expression depends on the subdivision chosen, but since so far everything we've done is unambiguous due to the absolute convergence of the sum, the analytical continuation will also be unique, and not depend on the subdivision. 
Because of this, the apparent singularities when $\re (A_i^k \underline \go)_j = 0$ are actually not there, except the ones associated to the generators of $C$ itself, i.e. when $\re (\tilde K_\rho \underline \go)_j = 0$, since these are the only ones we cannot get rid of by changing subdivision. 
\emph{For the rest of this article, when we write a sum of this form over a cone $C$, we understand it implicitly as this analytical continuation defined above. }

For our purposes, we also need to consider the sum over lattice points strictly inside $C$, i.e. in $C^\circ$
\be
	\sum_{n\in C^\circ \cap \mathbb{Z}^r } e^{ - n\cdot\underline \go}. 
\ee
Now, if we follow the same steps as above, but instead keep summing over the internal points of the subdivided cones, the shared faces and so on, then we realize that to properly count all points in the interior, we only have to add terms, never subtract them, so we do not get the factor $(-1)^{r-k}$ that we had above. 
We also do not have to think about the origin, as it isn't included in any of our cones, and we get  
\[
	\sum_{k=1}^r \sum_{i=1}^{N_k} \sum_{n\in (C^{k}_i)^\circ \cap \mathbb{Z}^r } e^{- n \cdot \underline \go }  
	= \sum_{k=1}^r \sum_{i=1}^{N_k} \sum_{(A^k_i n) = m \in \mathbb{Z}^{k}_{> 0} } e^{ - m \cdot (A_i^k \underline \go ) } 
	=  \sum_{k=1}^r \sum_{i=1}^{N_k} \prod_{j=1}^k \frac{e^{- (A_i^k \underline \go )_j } } { 1 - e^{ - (A_i^k \underline \go)_j } } . 
\]
This can be rewritten as 
\be
	 \sum_{k=1}^r \sum_{i=1}^{N_k} \prod_{j=1}^k \frac{1 } {  e^{  (A_i^k \underline \go)_j }- 1 }
	 = (-1)^r  \sum_{k=1}^r \sum_{i=1}^{N_k}  \frac{(-1)^{r-k} } { \prod_{j=1}^k (1 - e^{ (A_i^k \underline \go)_j } ) } 
	 = (-1)^r \sum_{n \in C\cap\mathbb{Z}^r } e^{ n \cdot \underline \go },
 \ee
where we in the final step use our analytic continuation to make sense of this sum. 
We summarize this in a lemma: 
\begin{lemma}\label{lem:conesum1}
Let $C$ be a good cone in $r$ dimensions, and let $\underline \go \in \comp^r$ be such that $\re (\tilde K_\rho \underline \go )_j \neq 0$ for all generators $\rho$ of $C$, then 
\be
	\sum_{n \in C^{\circ} \cap \mathbb{Z}^r } e^{- n\cdot \underline \go}  = (-1)^r \sum_{n \in C \cap \mathbb{Z}^r } e^{ n \cdot \underline \go }, 
\ee
where the sums are understood through the analytic continuation outlined above. 
\end{lemma}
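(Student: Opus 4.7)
The strategy is to use the same simplicial subdivision of $C$ into standard cones $\{C_i^k\}$ (which exists by Proposition \ref{prop:subdivision}) for both sides of the identity, compute each side as a finite sum of elementary rational functions via $SL_r(\mathbb{Z})$ transformations to $\mathbb{R}^k_{\geq 0}$, and then observe that the two closed-form expressions are related by an obvious algebraic manipulation. The right-hand side has already been evaluated in \eqref{eq:conesubdiv2} above, so the main task is to justify the corresponding expression for the left-hand side and then match them.

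First I would re-derive the formula for the interior sum. Fix a simplicial standard subdivision of $C$ as in the preceding discussion, and let $\Delta$ denote the simplicial complex on the base of $C$. For any $p \in C^\circ \cap \mathbb{Z}^r$, let $\Delta_p$ be the sub-complex of cones containing $p$. The counting argument used in the preceding discussion, applied now to $\Delta_p$ without quotienting by its boundary, gives $\chi(\Delta_p) = 1$ because $\Delta_p$ is contractible (intersection of star-shaped complexes). Hence summing over \emph{all} interior points of the cones of every dimension in the subdivision (including interior of shared faces), with coefficient $+1$ throughout, counts each $p \in C^\circ$ exactly once. Mapping each standard cone $C_i^k$ to $\mathbb{R}^k_{\geq 0}$ via $A_i^k$ and summing the resulting geometric series gives
\begin{equation}
\sum_{n \in C^\circ \cap \mathbb{Z}^r} e^{-n\cdot \underline\go} = \sum_{k=1}^r \sum_{i=1}^{N_k} \prod_{j=1}^k \frac{e^{-(A_i^k \underline \go)_j}}{1 - e^{-(A_i^k \underline \go)_j}} = \sum_{k=1}^r \sum_{i=1}^{N_k} \prod_{j=1}^k \frac{1}{e^{(A_i^k \underline \go)_j} - 1}.
\end{equation}

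Next I would perform the purely algebraic manipulation $\frac{1}{e^a-1} = -\frac{1}{1-e^a}$ inside each product. This pulls out a factor $(-1)^k$ from the $k$-th product, giving
\begin{equation}
\sum_{n \in C^\circ \cap \mathbb{Z}^r} e^{-n\cdot \underline\go} = \sum_{k=1}^r \sum_{i=1}^{N_k} \frac{(-1)^k}{\prod_{j=1}^k \bigl(1 - e^{(A_i^k \underline \go)_j}\bigr)} = (-1)^r \sum_{k=1}^r \sum_{i=1}^{N_k} \frac{(-1)^{r-k}}{\prod_{j=1}^k \bigl(1 - e^{(A_i^k \underline \go)_j}\bigr)}.
\end{equation}
By \eqref{eq:conesubdiv2} applied with $\underline \go$ replaced by $-\underline \go$, the final sum is exactly $\sum_{n \in C \cap \mathbb{Z}^r} e^{n\cdot \underline \go}$ (in the sense of analytic continuation), yielding the claim.

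The only genuine subtlety, and what I expect to be the main obstacle to present cleanly, is the counting step on the interior side: one must confirm that including the interior of every shared face (of every codimension) with coefficient $+1$ is the right convention, without double-counting on the boundary $\partial C$. The Euler-characteristic argument handles this uniformly because $\Delta_p$ for $p \in C^\circ$ is always contractible (no boundary-of-$C$ issues arise since $p$ lies strictly inside $C$), so no alternating signs enter — in contrast with the full-cone case where one takes the relative Euler character $\chi(\Delta_p, \partial \Delta_p)$. Once this counting is justified the rest is a routine manipulation of geometric series.
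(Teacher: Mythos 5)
Your proposal is correct and follows essentially the same route as the paper: decompose the interior sum over the same standard subdivision with all coefficients $+1$, sum the geometric series, and use $\tfrac{1}{e^{a}-1}=-\tfrac{1}{1-e^{a}}$ to pull out $(-1)^k=(-1)^r(-1)^{r-k}$ and match the analytically continued full-cone sum at $-\underline\go$. The only cosmetic difference is your Euler-characteristic justification of the interior counting, where the paper (implicitly) relies on the simpler fact that the relative interiors of the cones in the subdivision partition $C$, so each point of $C^\circ$ lies in exactly one of them.
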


\section{Generalized multiple sine functions}

\subsection{Definition of the generalized multiple sine functions}
Let $C$ be a good cone of dimension $r$ and 
and assume that $\underline \go \in \comp^r$ is such that there exists a $k\in\comp - \{ 0 \}$ such that $\re ( k \underline \go ) \in (\check C)^{\circ}$. 
In this case we define the generalized multiple zeta function associated with $C$, $\zeta^C_r$, as 
\be
	\zeta^C_r (s, z | \underline \go ) = \sum_{n \in C \cap \mathbb{Z}^r } \frac{1}{( z + n \cdot \underline \go )^s },
\ee
which is absolutely convergent for $\re s>r$, and where the exponential is rendered one-valued. 
This function is analytically continued to $s \in \comp$, and thus we can use it to define the generalized multiple gamma function associated to $C$ as 
\be
	\Gamma_r^C ( z | \underline \go ) = \exp \left ( \frac{\partial}{\partial s} \zeta^C_r(s,z|\underline \go ) |_{s=0} \right ) . 
\ee
This mirrors the definitions of the ordinary zeta and gamma functions; it's again exactly the procedure of zeta-regularizing the infinite product over the lattice points in the cone. 
We can repeat the construction for $C^\circ$, simply replacing the sum in $\zeta^C_r$ with one over $C^\circ$, and with this we can define the generalized multiple sine function.
\begin{definition}
The generalized multiple sine function associated to $C$ is given by 
\be
	S_r^C(z|\underline \go) = \Gamma^C_r ( z | \underline \go)^{-1} \Gamma^{C^\circ}_r ( -z | \underline \go )^{(-1)^r } .  
\ee
\end{definition}
If $C = \mathbb{R}^r_{\geq 0}$ this is equal to the usual multiple sine function.

\subsection{Generalized Bernoulli polynomials}
Before giving the integral representation of these generalized multiple sine functions, we introduce the \emph{generalized Bernoulli polynomials} associated to a cone, which are defined through the formal generating series
\be \label{eq:genBernoullidef}
	(-1)^r t^r e^{zt} \sum_{n\in C \cap\mathbb{Z}^r} e^{(\underline \omega \cdot n) t } = \sum_{n=0}^\infty B_{r,n}^C ( z | \underline \omega ) \frac{t^n}{n!}. 
\ee
This generalizes the usual definition of Bernoulli polynomials and reduces to it if $C= \mathbb{R}^r_{\geq 0}$.
These polynomials will show up in the integral and infinite product representations of the generalized multiple sine functions, as well as in the modularity properties of the generalized multiple elliptic gamma functions. 
These can be seen to share some (but not all) of the functional relations the original Bernoulli polynomials enjoy, for example it's easy to show that both of the properties shown in equation \eqref{eq:bernoulliproperties} also apply to $B_{r,n}^C$ with the appropriate modification to the second one:
\be \label{eq:bernoulliCproperty}
	B_{r,n}^{C^\circ} ( - z | \underline \go ) = (-1)^n B_{r,n}^C ( z | \underline \go ),
\ee
where we in the proof need to apply lemma \ref{lem:conesum1}. 

We also remark (without giving proof, since it's not important for the main topic of this note) that the coefficients of the polynomials $B_{r,r}^C$ can be given in terms of geometrical data of the cone $C$. 
For example, the coefficient of the leading term ($z^r$) in $B_{r,r}^C$ is proportional to the volume of $C$ capped by the plane $(\re \underline \go ) \cdot y = 1$ (when $\re \underline \go \in \check C$), and the next-to-leading coefficient is proportional to the surface area of the same capped cone and so on. 
We also mention that we've found some intriguing relations between the constant terms of these Bernoulli polynomials and Dedekind sums, the usual ones when $r=2$ and higher-dimensional versions for higher $r$. 
However at present we don't have a good enough understanding of this to write something sensible (outside $r=2,3$), so we leave it for possible future work.

\subsection{Integral representation}
In this section, we derive an integral representation of our generalized multiple sine functions, which we will then use to derive an interesting infinite product representation. 
We start by giving an integral representation of our generalized zeta function. 
Assume again that $\underline \go$ is such that there exists a $k\in \comp - \{ 0 \}$ such that $k\underline \go \in \check C^\circ$ and  that $\re ( k z ) > 0 $. 
Following the argument of Barnes \cite{Barnes}, one can see that under these conditions on $\underline \go$, $\zeta^C_r$ has the following integral representation:
\be
	\zeta^C_r ( s, z | \underline \go ) = - \frac{\Gamma (1-s)} { 2 \pi i  } \int_L (-t)^{s-1} \sum_{n \in C \cap \mathbb{Z}^r } e^{- ( z + m \cdot \underline \go ) t } dt ,
\ee
where $L$ is a contour enveloping the half-line through the origin in the direction of $k$, see figure \ref{fig:Lcont}. 
We can also repeat that every time we write a sum like this over lattice points in a cone $C$, we understand it the way described in section \ref{sec:conesums}, i.e. as an analytical function of $\underline \go$ that can be defined by a simplicial subdivision of $C$. 
This choice of contour guarantees us that the integral will converge, and that the only pole enveloped by the contour will be the one at the origin. 
\begin{figure}
\begin{center}
\begin{tikzpicture}
\draw[thin, gray, ->] (-0.6,0) -- (3,0);
\draw[thin, gray, ->] (0,-0.4) -- (0,1.7);
\draw (0,0) -- (3,1.5) node[anchor=south east] {$L$};
\draw [->] (3,1.7) -- (1.5, 0.95);
\draw (1.5, 0.95) -- (-.08, 0.16);  
\draw[->] (0.08, -0.16) -- (1.5, 0.55);
\draw (1.5, 0.55) -- (3, 1.3);
\draw (-0.08, 0.16) arc (116.6:296.6:0.179);
\fill[black] (0,0) circle (0.05cm);
\end{tikzpicture}
%
%
\end{center}
\caption{The contour of integration enveloping the line $L$.}\label{fig:Lcont}
\end{figure}
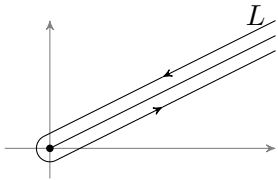
We then take a derivative of this expression with respect to $s$ and use that to find an integral representation of the multiple gamma function $\Gamma_r^C$: 
\be
	\Gamma^C_r ( z | \underline \go ) = \exp \left [  \frac{1}{2\pi i } \int_L \frac{e^{-zt}}{t} (\log (-t) + \gamma ) \sum_{n \in C\cap\mathbb{Z}^r } e^{- (n\cdot \underline \go ) t } dt \right ]  ,
\ee
where $\gamma$ is Eulers constant, appearing from the derivative of the usual gamma function, and the logarithm is chosen with a branch cut along the half-line enveloped by $L$ and such that $\log(-t)$ is real when $t$ is real and negative. 

This gives us an integral representation of $S_r^C$, which we use to find the following properties, that closely mirrors properties of the usual multiple sine. 
\begin{proposition}\label{prop:SrCintegral}
(i) The generalized multiple sine function satisfies	$S_r^C ( c z | c \underline \go) = S_r^C ( z | \underline \go )$ for $c\in \comp - \{ 0 \}$.\\
(ii) When $\underline \go$ is such that $Im \frac{(\tilde K_\rho \underline \go)_j}{(\tilde K_\rho \underline \go )_1 } \neq 0$ for all generating rays $\rho$ of $C$ and for $j=2,\ldots,r$, then
$S_r^C$ has the integral representations 
\bea
	S_r^C ( z | \underline \go ) &=&  \exp \left [ (-1)^r \frac{i \pi }{r!} B^C_{r,r}(z | \underline \go) + \int_{\mathbb{R}+i0} \frac{e^{zt}}{t} \sum_{n\in C\cap\mathbb{Z}^r } e^{(n\cdot \underline \go )t } dt \right ] \\
						&=&  \exp \left [ (-1)^{r+1}  \frac{i \pi }{r!} B^C_{r,r}(z | \underline \go) + \int_{\mathbb{R}-i0} \frac{e^{zt}}{t} \sum_{n\in C\cap\mathbb{Z}^r } e^{(n\cdot \underline \go )t } dt \right ] \ . 
\eea 
\end{proposition}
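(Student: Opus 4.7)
My plan is to handle (i) first from the homogeneity of the defining zeta function, and then derive (ii) by combining the Barnes-type contour representations of the two gamma factors, applying Lemma \ref{lem:conesum1} to match them, and finally collapsing the contour $L$ onto the real axis.

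For (i) I start from the homogeneity $\zeta_r^C(s, cz|c\underline\go) = c^{-s}\zeta_r^C(s, z|\underline\go)$, which holds in the region of absolute convergence and is preserved by the analytic continuation in $s$. Differentiating at $s=0$ gives
\[
\log\Gamma_r^C(cz|c\underline\go) = \log\Gamma_r^C(z|\underline\go) - (\log c)\,\zeta_r^C(0, z|\underline\go),
\]
and the analogous identity for $C^\circ$. From the Mellin--Barnes representation (which I establish in the course of part (ii)) one computes $\zeta_r^C(0,z|\underline\go) = (-1)^r B_{r,r}^C(z|\underline\go)/r!$, after which the symmetry \eqref{eq:bernoulliCproperty} makes the $\log c$ contributions from $\Gamma_r^C(z|\underline\go)^{-1}$ and from $\Gamma_r^{C^\circ}(-z|\underline\go)^{(-1)^r}$ cancel exactly in $\log S_r^C$, proving (i).

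For (ii) I write $\log S_r^C(z|\underline\go)$ as the appropriate sum of the two contour integrals for $\log\Gamma_r^C(z|\underline\go)$ and $\log\Gamma_r^{C^\circ}(-z|\underline\go)$, both on the contour $L$. Lemma \ref{lem:conesum1}, applied with $\underline\go$ replaced by $t\underline\go$, lets me rewrite the interior sum in the second integral as $(-1)^r\sum_{n\in C}e^{(n\cdot\underline\go)t}$, absorbing the outer $(-1)^r$ coming from the exponent. A change of variable $t\to -t$ in the first integral, together with careful tracking of the induced contour reorientation and of the transformation $\log(-t)\to\log(t)$, aligns both pieces so that they combine into a single contour integral of $\frac{e^{zt}}{t}\bigl(\log(-t)+\gamma\bigr)\sum_{n\in C}e^{(n\cdot\underline\go)t}$ along a contour enveloping one half-line. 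Collapsing this contour onto the real axis, the jump of $\log(-t)$ by $2\pi i$ across its branch cut turns the straight-line portions into $\int_{\mathbb{R}\pm i0}\frac{e^{zt}}{t}\sum_{n\in C}e^{(n\cdot\underline\go)t}\,dt$, with the $\pm i0$ prescription fixed by which side of $\mathbb{R}$ the deformation is performed on, while the small loop around the origin contributes a residue. Expanding that residue against the generating series \eqref{eq:genBernoullidef} identifies it as $(-1)^r\frac{i\pi}{r!}B_{r,r}^C(z|\underline\go)$. The two sign choices in the statement correspond to the two possible directions of the contour collapse.

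The main technical obstacle is justifying this final contour deformation rigorously. After the simplicial subdivision described in section \ref{sec:conesums}, the cone sum $\sum_{n\in C}e^{(n\cdot\underline\go)t}$ is a finite combination of rational expressions in the $e^{(\tilde K_\rho\underline\go)_j t}$, which acquire poles on the discrete set of lines $(\tilde K_\rho\underline\go)_j\,t \in 2\pi i\mathbb{Z}$ indexed by the generating rays $\rho$ of $C$. The hypothesis $\mathrm{Im}\,(\tilde K_\rho\underline\go)_j/(\tilde K_\rho\underline\go)_1\neq 0$ is precisely what guarantees that, after absorbing the common factor $(\tilde K_\rho\underline\go)_1$ into a rescaling of $t$, these poles sit strictly off the real $t$-axis; the real-line integral then converges and the contour may be pulled past them without obstruction. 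Verifying that the subdivision-independence from section \ref{sec:conesums} survives all the contour manipulations is what makes this step the core of the proof.
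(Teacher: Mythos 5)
Your argument is correct, and for part (ii) it is essentially the paper's own proof: Barnes' contour representation of $\zeta_r^C$, Lemma \ref{lem:conesum1} to convert the $C^\circ$ sum into $(-1)^r$ times the $C$ sum, collapse of the contour onto the real axis using the $2\pi i$ jump of $\log(-t)$, and identification of the origin residue with $(-1)^r\frac{i\pi}{r!}B_{r,r}^C(z|\underline\go)$ via \eqref{eq:genBernoullidef}, the two $\pm i0$ prescriptions coming from the two deformation directions. For part (i) you take a genuinely different (and arguably cleaner) route: you use the $s$-homogeneity $\zeta_r^C(s,cz|c\underline\go)=c^{-s}\zeta_r^C(s,z|\underline\go)$, so that the only $c$-dependence of $\log S_r^C$ is $\log c$ times a combination of special values $\zeta_r^C(0,z|\underline\go)=(-1)^rB_{r,r}^C(z|\underline\go)/r!$ and $\zeta_r^{C^\circ}(0,-z|\underline\go)$, which cancels by \eqref{eq:bernoulliCproperty}. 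The paper instead keeps everything inside the contour integral and observes that the coefficient of $\log c$ is $\int(\phi^C(t)+\phi^C(-t))\,dt$ over a loop around the origin, which vanishes because the integrand is even; the two arguments are equivalent (the vanishing residue of the even combination \emph{is} the cancellation of the two $\zeta(0)$ values), but yours makes the mechanism more transparent, at the price of having to compute $\zeta_r^C(0,\cdot)$ explicitly, while the paper's needs no special values at all. One small imprecision: after the substitution $t\to-t$ the two pieces do not literally merge into one integral over a contour enveloping a single half-line; the paper's bookkeeping stretches the contour along the \emph{entire} real line (the branch cut sitting on only one half of it), and it is the sum of the $\log(-t)\mp i\pi$ contributions from the two sides that leaves the pure $\pm i\pi\int\phi^C$ terms. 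This is a presentational slip rather than a gap, since your final expressions (the $\int_{\mathbb{R}\pm i0}$ integrals plus the Bernoulli term) are the correct ones.
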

\begin{proof}
We first prove (i). Assume that $\underline \go$ and $z$ are such that we can apply the integral representation of our generalized zeta function. 
Given this, we study $\Gamma_r^C ( c z | c\underline \go)$ for $c \in C-\{0 \}$ with $-c \notin L$. 
Let $c^{-1} L$ denote the half-line along $c^{-1} k$,  and call the contour which envelopes it $c^{-1} \tilde L$. 
Then 
\[
	\Gamma_r^C ( cz | c\underline \go ) = \exp \left [ \frac 1 {2\pi i } \int_{c^{-1} \tilde L } \frac{e^{-czt} }{t} (\log(-t) + \gamma ) \sum_{n \in C \cap \mathbb{Z}^r } e^{- c( n\cdot \underline \go ) t } dt \right ] , 
\]
where now the cross cut of the $\log$ is along $c^{-1} L$.
 We again remind the reader that the sum over lattice points in $C$ should be thought of as explained in \ref{sec:conesums}.
If one then changes $t$ into $c^{-1} t$ and changes the branch of the logarithm, we get 
\[
	\Gamma_r^C ( cz | c\underline \go ) = \exp \left [ \frac 1 {2\pi i } \int_{ \tilde L } \frac{e^{-zt} }{t} (\log(-t) - \log c + \gamma ) \sum_{n \in C \cap \mathbb{Z}^r } e^{- ( n\cdot \underline \go ) t } dt \right ] . 
\]
Next we define for notational convenience the function 
\[
	\phi^C ( t ) = \frac{e^{zt}}{t} \sum_{n\in C\cap \mathbb{Z}^r } e^{(n\cdot\underline \omega ) t },
\]
in terms of which our gamma function can be written 
\[
	\Gamma_r^C ( cz | c\underline \go ) = \exp \left [- \frac 1 {2\pi i } \int_{ \tilde L } \phi^C (-t) (\log(-t) - \log c + \gamma )  dt \right ] . 
\]
Next, we consider $\Gamma_r^{C^{\circ}}(- c z | c \underline \go )$. 
The integral representation of this is, following the same steps as above,
\[
	\Gamma_r^{C^\circ } ( - cz | c\underline \go ) = \exp \left [ \frac 1 {2\pi i } \int_{ \tilde L } \frac{e^{zt} }{t} (\log(-t) - \log c + \gamma ) \sum_{n \in C^\circ \cap \mathbb{Z}^r } e^{- ( n\cdot \underline \go ) t } dt \right ] . 
\]
Now, using lemma \ref{lem:conesum1} we can rewrite part of the integrand here as follows: 
\[
	\frac{e^{zt}}{t} \sum_{n \in C^\circ \cap \mathbb{Z}^r } e^{- ( n\cdot \underline \go ) t } = \frac{e^{zt}}{t} (-1)^r \sum_{n \in C \cap \mathbb{Z}^r } e^{( n\cdot \underline \go ) t } = (-1)^{r} \phi^C ( t ) ,
\]
which lets us write the generalized multiple sine as 
\[
	S_r^C ( c z | c \underline \go ) = \exp \left [ \frac 1 {2\pi i } \int_{ \tilde L } ( \phi^C (-t) +  \phi^C ( t ) ) (\log(-t) - \log c + \gamma )  dt \right ] .
\]
Now we note that the integral $\int_{\tilde L} ( \phi^C (t) + \phi^C ( -t ) ) dt $ can be replaced by an integrand along a contour $C_0$ around the origin, since the origin is the only pole inside this contour and both $\phi^C (t)$ and $\phi^C(-t)$ are rapidly decreasing along $\tilde L$.
This rapid decrease is since either the pre factor $e^{\pm zt}/t$ is exponentially decreasing and the sum is tending to some constant, or in the case that $e^{\pm zt }$ is growing, the sum will be decreasing exponentially at a much quicker rate. 
Further we observe that since $( \phi^C (t) + \phi^C ( -t ) )$ is an even function, its integral around $C_0$ will vanish, allowing us to drop both Euler's constant as well as $\log c $ from the above expression. 
This leaves us with 
\[
	S_r^C ( c z | c \underline \go ) = \exp \left [ \frac 1 {2\pi i } \int_{ \tilde L } ( \phi^C (t) +  \phi^C ( - t ) ) \log(-t)  dt \right ] ,
\]
where the right hand side is independent of $c$. 

This means that $S_r^C(cz | c\underline\go)$ coincides with $S_r^C(z|\underline \go )$ as long as our assumptions on $z$ and $\underline \go$ holds.
But in fact analytic continuation then implies that this is true for any $z$ away from the poles. 

Next, we set $c=1$ and $\underline\go$ and $z$ such that $L=\mathbb{R}_{\geq 0}$, and we integrate along a contour $C_1$ that envelopes the positive real axis, see figure \ref{fig:C1cont}.
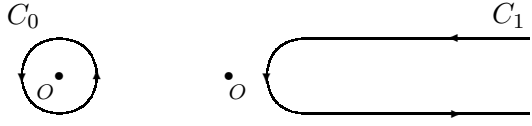
\begin{figure}
\begin{center}
\setlength{\unitlength}{1mm}
\begin{picture}(20,40)(-10,-20)
\put(0,0){\circle*{1}}
\put(-3,-3){$\scriptstyle O$}
\put(-7,7){$C_0$}
\bezier{100}(5,0)(4.7,4.7)(0,5)
\bezier{100}(-5,0)(-4.7,4.7)(0,5)
\bezier{100}(-5,0)(-4.7,-4.7)(0,-5)
\bezier{100}(5,0)(4.7,-4.7)(0,-5)
\put(5,0){\vector(0,1){1}}
\put(-5,0){\vector(0,-1){1}}
\end{picture} \hspace{10mm}
\begin{picture}(40,40)(-10,-20)
\put(-10,0){\circle*{1}}
\put(-10,-3){$\scriptstyle O$}
\put(0,5){\line(1,0){30}}
\put(0,-5){\line(1,0){30}}
\bezier{100}(-5,0)(-4.7,4.7)(0,5)
\bezier{100}(-5,0)(-4.7,-4.7)(0,-5)
\put(20,5){\vector(-1,0){1}}
\put(20,-5){\vector(1,0){1}}
\put(-5,0){\vector(0,-1){1}}
\put(25,7){$C_1$}
\end{picture}
\end{center}
\caption{Two contours of integration, $C_0$ enveloping the origin and $C_1$ enveloping the positive real line excluding the origin. } \label{fig:C1cont}
\end{figure}
The branch cut of the log is chosen along $L$.
Then we stretch the contour of integration along the negative real axis, which implies
\be
	\int_{ C_1 } ( \phi^C (t) +  \phi^C ( - t ) ) \log(-t)  dt = \left (  \int_{-\mathbb{R}+i\epsilon} + \int_{\mathbb{R}-i\epsilon} \right ) ( \phi^C (t) +  \phi^C ( - t ) ) \log(-t)  dt
\ee
Changing $t$ into $-t$ without changing the branch of the logarithm shifts the logarithms by $\pm i\pi$:
\[ \begin{split}
	\int_{-\mathbb{R}+i\epsilon} \phi^C(-t)\log(-t) dt = -\int_{\mathbb{R}-i\epsilon} \phi^C(t) ( \log(-t)-i\pi ) dt \\
	\int_{\mathbb{R}-i\epsilon} \phi^C (-t) \log(-t) dt = - \int_{-\mathbb{R}+i\epsilon} \phi^C(t) (\log(-t) + i\pi ) dt. 
	\end{split}
\]
Using this, we have 
\[
	\left (  \int_{-\mathbb{R}+i\epsilon} + \int_{\mathbb{R}-i\epsilon} \right ) ( \phi^C (t) +  \phi^C ( - t ) ) \log(-t)  dt = 
	- i \pi \int_{-\mathbb{R}+i\epsilon}  \phi^C ( t) dt +  i\pi \int_{\mathbb{R}-i\epsilon} \phi^C ( t) dt 
\]
Finally, we can combine the two integrals into a single one, by switching direction of the first integral and deforming $\mathbb{R}-i\epsilon$ into $\mathbb{R}+i\epsilon$, taking care of the contribution from the pole at the origin:
\[
	- i \pi \int_{-\mathbb{R}+i\epsilon}  \phi^C ( t) dt +  i\pi \int_{\mathbb{R}-i\epsilon} \phi^C ( t) dt  = 2 \pi i  \int_{\mathbb{R} + i \epsilon } \phi^C(t) dt + i \pi \int_{C_0} \phi^C (t) dt .  
\]
Finally, we observe that the residue of $\phi^C(t)$ at $t=0$ is given by our generalized Bernoulli polynomial, or more precisely 
\[
\frac 1 2 \int_{C_0} \phi^C (t) dt = (-1)^r  \frac{\pi i }{r!}B_{r,r}^C ( z | \underline \go ),
\]
as seen from its definition in terms of the formal series in equation \eqref{eq:genBernoullidef}. 
Putting this together, we've shown 
\be
S_r^C ( z | \underline \go ) = \exp \left [ (-1)^r \frac{i \pi }{r!} B^C_{r,r}(z | \underline \go) + \int_{\mathbb{R}+i0} \frac{e^{zt}}{t} \sum_{n\in C\cap\mathbb{Z}^r } e^{(n\cdot \underline \go )t } dt \right ] . 
\ee
The second integral representation follows by choosing to deform the contour into $\mathbb{R}-i0$ in the last step instead. 
\end{proof}

\subsection{Infinite product representation}
In this section we use the integral representation to find an intriguing infinite product representation of $S_r^C$, which lets us write it as a product of shifted q-factorials, again mirroring a property of the usual multiple sine functions, but now with a non-trivial dependence on the choice of cone, with one $q$-factorial factor coming from each generator of the cone.
Before proving this we need the following lemma, that allows us to perform a contour integration.

\begin{lemma}\label{lem:convergence}
Let $C$ be a good cone of dimension $r$, and let $\re \underline \go \in (\check C)^{\circ}$. 
Then there exists a neighborhood $U\subset \comp$ and a real series $\{ a_n \}$ such that 
\[
	\lim_{n\rightarrow \infty} a_n = \infty, \ \ \ \ \mathrm{and} \ \ \lim_{n\rightarrow \infty} \int_{\mathbb{R}+ia_n} \frac{e^{zt}}{t} \sum_{n\in C\cap \mathbb{Z}^r } e^{t (n\cdot \underline \go)} dt = 0,
\]
for $z \in U$.
There also exists another $U'\subset \comp$  and another real series $\{ b_n \}$ such that 
\[
	\lim_{n\rightarrow \infty} b_n = -\infty, \ \ \ \ \mathrm{and} \ \ \lim_{n\rightarrow \infty} \int_{\mathbb{R}+i b_n} \frac{e^{zt}}{t} \sum_{n\in C\cap \mathbb{Z}^r } e^{t (n\cdot \underline \go)} dt = 0,
\]
for $z\in U'$. 
\end{lemma}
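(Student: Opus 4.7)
The plan is to use the subdivision formula of Section \ref{sec:conesums} to rewrite the lattice-sum as a finite combination of explicit meromorphic functions, to pick the horizontal contour heights $a_n$ so as to stay uniformly away from the poles, and finally to estimate the resulting integrand directly, balancing the exponential decay of $e^{zt}$ against the worst-case exponential behaviour of the denominators.

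Fix a simplicial standard subdivision $\{C^k_i\}$ of $C$ as in Proposition \ref{prop:subdivision}, with corresponding transformations $A^k_i \in SL_r(\mathbb{Z})$, and set $\alpha^{k,i}_j := (A^k_i\underline\go)_j$. Since $\re\underline\go\in(\check C)^\circ$ and every generator of every $C^k_i$ is a nonzero vector of $C$, one has $\re\alpha^{k,i}_j>0$ for all admissible triples. Substituting $\underline\go$ by $-t\underline\go$ in equation \eqref{eq:conesubdiv2} then gives the meromorphic identity
\[
F(t) := \sum_{n\in C\cap\mathbb{Z}^r} e^{t(n\cdot\underline\go)} = \sum_{k=1}^r\sum_{i=1}^{N_k}\frac{(-1)^{r-k}}{\prod_{j=1}^k\bigl(1-e^{t\alpha^{k,i}_j}\bigr)},
\]
whose poles are contained in the countable discrete set $P=\bigcup_{k,i,j}\{2\pi im/\alpha^{k,i}_j:m\in\mathbb{Z}\setminus\{0\}\}$.

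The set of $a\in\mathbb{R}$ for which the line $\mathbb{R}+ia$ meets $P$ is a countable union of arithmetic progressions, so I choose $a_n\to+\infty$ (respectively $b_n\to-\infty$) in its complement, keeping $\text{dist}(\mathbb{R}+ia_n,P)\geq\delta$ for some fixed $\delta>0$. This separation gives the uniform bound $|1-e^{t\alpha^{k,i}_j}|^{-1}\leq C_\delta\min(1,|e^{t\alpha^{k,i}_j}|^{-1})$ on the contour. I take $U$ to be a strip $\{z\in\comp:0<\re z<\eta,\ \im z>M\}$ with $\eta<\min_{k,i,j}\re\alpha^{k,i}_j$ and $M$ chosen large (see below), and $U'$ a symmetric strip in the lower half-plane. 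For $t=s+ia_n$ and $z\in U$, combining $|e^{zt}/t|\leq a_n^{-1}e^{s\re z-a_n\im z}$ with the factored bound on $F$, each summand of the integrand is dominated by
\[
\frac{C_\delta^k}{a_n}\,e^{-a_n\im z}\,e^{s\re z}\prod_{j=1}^k\min\!\bigl(1,\,e^{-s\re\alpha^{k,i}_j+a_n\im\alpha^{k,i}_j}\bigr),
\]
which is integrable in $s$ because $0<\re z<\min_{k,i,j}\re\alpha^{k,i}_j$. Splitting the $s$-integral at the threshold where each $\min$ switches regime yields an overall bound of shape $a_n^{-1}e^{-a_n(\im z-M_0)}$, with $M_0$ an explicit constant built from the $\alpha^{k,i}_j$; taking $M>M_0$ makes this tend to $0$ as $n\to\infty$ for every $z\in U$. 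The case of $\{b_n\}$ and $U'$ is identical after reflection $a\mapsto-a$.

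The main obstacle is the interplay between the decay $e^{-a_n\im z}$ from the prefactor and the exponential growth $e^{a_n\im\alpha^{k,i}_j}$ produced by those denominators with $\im\alpha^{k,i}_j>0$. The resolution is to restrict $U$ (and $U'$) to a sufficiently deep half-plane, chosen so that the decay strictly dominates; this is compatible with the lemma's ``there exists a neighbourhood $U$'' assertion and causes no loss in the subsequent application to the infinite product representation, where this lemma is combined with analytic continuation.
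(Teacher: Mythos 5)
Your proposal is correct and follows essentially the same route as the paper: rewrite the lattice sum via the standard subdivision of $C$ as a finite sum of products $\prod_j(1-e^{t(A_i^k\underline\go)_j})^{-1}$, choose the heights $a_n$ (resp. $b_n$) uniformly separated from the poles, and estimate the integrand, restricting $z$ to a region with $0<\re z$ small and $\im z$ sufficiently large (resp. negative). Your write-up is in fact more explicit than the paper's about the uniform bound on the denominators and the balance between $e^{-a_n\im z}$ and the growth $e^{a_n\im\alpha^{k,i}_j}$, which the paper leaves to the phrase ``by estimating the absolute value of the integrand.''
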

\begin{proof}
There exist a small $\epsilon >0$ and a real series $\{ a_n \}$ such that the distances from each pole to any contour $\mathbb{R} + i a_n$ are more than $\epsilon$. 
We now need to argue for the existence of a good set of $z$-values to make the integrals converge. 
We use the subdivision of $C$ described in equation \eqref{eq:conesubdiv2}, and then note that the parameters $ (A^k_i \underline \go )_j$ can be thought of as follows. 
All of them are associated with a particular cone in the subdivision of $C$, and can be seen as the dot-product between a particular generator of this cone and the parameters $\underline \go$. 
And since all these generators lie inside $C$ and we've assumed that $\re \underline \go$ lie strictly inside the dual cone of $C$, all these will have strictly positive real part. 

Then consider the integral in the first statement of the lemma, and use the subdivision of $C$ to break it into a sum of integrals, each of which will have the form
\[
	\int_{\mathbb{R}+i a_n} \frac{e^{zt}}{t} \frac{dt}{\prod_{j=1}^k ( 1  - e^{ t (A_{i}^k \underline \go )_j } ) }.
\]
Then by estimating the absolute value of the integrand as $a_n \rightarrow \infty$, we see that it goes to zero if $ 0 < \re z < \re | A_i^k \underline \go |$, $\im z > 0$ and $\im z > \im | A_i^k \underline \go | $. 
We see that such $z$'s will always exist since for all cones in the subdivision, $\re | A_i^k \underline \go |$ is strictly positive, so there will be a smallest one and we can pick $\re z$ smaller than this but still positive.
So there exists a subset $U$ such that the integral vanishes.

Similarly for the second case, we estimate the absolute value of the integrand and find that it goes to zero if $0< \re z < \re |A_i^k \underline \go |$, $\im z < 0$ and $\im z < \im | A^i_k |$, where we again see that a set of such $z$'s exist. 
\end{proof}

Now, we use the integral representation from proposition \ref{prop:SrCintegral}, together with this lemma and our results about sums over cones to prove the following infinite product representation of $S^C_r$. 
\begin{theorem}\label{thm:SrCproductrep}
Let $C$ be a good cone of dimension $r \geq 2$ and let $\underline \go \in \comp^r$ be such that $ \im \frac{ (\tilde K_\rho \underline \go )_j } { (\tilde K_\rho \underline \go )_1 } \neq 0 $ for all generating rays $\rho$ of $C$ and all $j = 2,\ldots,r$. 
Then $S_r^C ( z | \underline \go)$ has the following two infinite product representations
\bea
	S_r^C ( z | \underline \go ) &=& e^{(-1)^r \frac{\pi i }{r!}B_{r,r}^C ( z | \underline \go ) } \prod_{\rho \in \Delta_1^C} ( x_\rho | \underline q_\rho )_\infty \\
		&=& e^{(-1)^{r+1} \frac{\pi i }{r!}B_{r,r}^C ( z | \underline \go ) } \prod_{\rho \in \Delta_1^C} ( x_\rho^{-1} | \underline{ q}_\rho^{-1} )_\infty , 
\eea
where $\Delta^C_1$ is the set of generators of $C$, and 
\[
		x_\rho = e^{2 \pi i \frac{z}{(\tilde K_\rho \go)_1 } }, \hspace{5 mm} 	 \underline q_\rho = \left (e^{2\pi i \frac{(\tilde K_\rho \go)_2}{(\tilde K_\rho \go)_1 } }, \cdots, e^{2\pi i \frac{(\tilde K_\rho \go)_r}{(\tilde K_\rho \go)_1} } \right  ), 
\]
where $\tilde K_\rho$ are the associated $SL_{r}(\mathbb{Z})$ (or, for $r=2$, $GL_2(\mathbb{Z})$) matrices as described in section \ref{sec:conemodularity}.
\end{theorem}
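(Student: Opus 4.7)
The plan is to apply the integral representation of $S_r^C$ provided by Proposition~\ref{prop:SrCintegral} and evaluate
\[
I(z|\underline\go) := \int_{\mathbb{R}+i0}\frac{e^{zt}}{t}\,\phi^C(t)\,dt, \qquad \phi^C(t) := \sum_{n\in C\cap\mathbb{Z}^r} e^{(n\cdot\underline\go)t},
\]
by closing the contour upward. Lemma~\ref{lem:convergence} supplies a sequence $a_n\to\infty$ for which the integrals over $\mathbb{R}+ia_n$ vanish on a suitable open set of $z$, so by the residue theorem $I = 2\pi i\sum \mathrm{Res}$ over the poles of the integrand in the upper half plane (the origin lies below $\mathbb{R}+i0$ and is not enclosed). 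Once these residues are organised into a sum over generators $\rho\in\Delta_1^C$, Lemma~\ref{lem:polylogexp} identifies the resulting $q$-polylogarithms with the logarithms of $q$-factorials, delivering the product formula.

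The technical heart of the argument is locating the poles of $\phi^C$ and computing the associated residues. Via a simplicial subdivision and formula \eqref{eq:conesubdiv2}, $\phi^C$ is a finite signed sum of terms $1/\prod_{j=1}^k(1-e^{t(A^k_i\underline\go)_j})$, whose apparent $t$-poles lie at rays of the subdivision. Since $\phi^C$ is intrinsic to $C$ (independent of the subdivision), and since Lemma~\ref{lem:rayexclusion} lets us exclude any ray $\rho'\notin\Delta_1^C$ from some subdivision, all apparent poles along non-generator rays must cancel among adjacent pieces. The genuine poles are therefore simple, located at $t_m = 2\pi i m/(\tilde K_\rho\underline\go)_1$ for $\rho\in\Delta_1^C$, $m\in\mathbb{Z}\setminus\{0\}$. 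Summing the contributions from the sub-cones in the star of $\rho$ (whose shared data is encoded by $\tilde K_\rho$ after the cancellation of fake singularities), one finds
\[
\mathrm{Res}_{t=t_m}\frac{e^{zt}}{t}\phi^C(t) = -\frac{x_\rho^m}{2\pi i\,m\prod_{j=2}^r(1-q_{\rho,j}^m)},
\]
with $q_{\rho,j} = e^{2\pi i(\tilde K_\rho\underline\go)_j/(\tilde K_\rho\underline\go)_1}$. Under the theorem's hypothesis the upper half plane captures $m\geq 1$, so multiplying by $2\pi i$ and summing gives $I = -\sum_\rho\li_{r+1}(x_\rho|\underline q_\rho)$, and Lemma~\ref{lem:polylogexp} converts this to $\sum_\rho\log(x_\rho|\underline q_\rho)_\infty$. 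Combined with the Bernoulli prefactor from Proposition~\ref{prop:SrCintegral}, this proves the first infinite product representation.

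The second representation follows by running the same argument for the alternative integral in Proposition~\ref{prop:SrCintegral}, deforming the contour downward via the second half of Lemma~\ref{lem:convergence} and picking up the poles at $t_m$ with $m\leq -1$; after rewriting $\sum_{m\leq -1} x_\rho^m/(m\prod_j(1-q_{\rho,j}^m)) = -\li_{r+1}(x_\rho^{-1}|\underline q_\rho^{-1})$ and applying Lemma~\ref{lem:polylogexp}, one obtains $\prod_\rho(x_\rho^{-1}|\underline q_\rho^{-1})_\infty$ together with the sign-flipped Bernoulli factor. The main obstacle is the residue step: one must rigorously justify that the contributions from all sub-cones of a subdivision that share the generator $\rho$ assemble into the single clean expression involving only $\tilde K_\rho$. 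This requires combining the subdivision-independence of $\phi^C$ with Lemma~\ref{lem:rayexclusion} — so that all apparent residues at internal subdivision rays demonstrably cancel — together with an explicit bookkeeping showing that the surviving contributions at $t_m$ telescope to the stated formula regardless of how the star of $\rho$ is triangulated.
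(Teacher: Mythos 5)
Your overall strategy is the same as the paper's: start from Proposition \ref{prop:SrCintegral}, close the contour upward (resp.\ downward) using Lemma \ref{lem:convergence}, kill the spurious poles at non-generator rays via subdivision-independence and Lemma \ref{lem:rayexclusion}, and convert the residue sums at each generator into $q$-factorials via Lemma \ref{lem:polylogexp}. The residue values, the sign bookkeeping for both half-planes, and the identification $\sum_{m\le -1} x^m/(m\prod_j(1-q_j^m)) = -\li(x^{-1}|\underline q^{-1})$ are all correct (modulo a harmless off-by-one in the polylogarithm subscript: with $r-1$ parameters $\underline q_\rho$ the relevant function is $\li_r$, not $\li_{r+1}$).

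However, the step you yourself flag as ``the main obstacle'' --- showing that the contributions from all sub-cones in the star of $\rho$ assemble into the single expression governed by $\tilde K_\rho$ --- is a genuine gap, and your proposed route (explicit bookkeeping of a telescoping sum over an arbitrary triangulation of the star of $\rho$) is not carried out and would be painful. The paper avoids this summation entirely: since the analytically continued sum is independent of the subdivision, one is free to \emph{choose} a subdivision in which $\rho$ belongs to exactly one standard simplicial sub-cone, namely by ``cutting off'' the corner at $\rho$ with a single hyperplane whose normal $n^\rho$ completes the $r-1$ facet normals $v_1^\rho,\ldots,v_{r-1}^\rho$ at $\rho$ into an $SL_r(\mathbb{Z})$ element (this is exactly where the goodness hypothesis enters). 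With that choice the family of residues at $t=2\pi i m/(\tilde K_\rho\underline\go)_1$ comes from a single term of \eqref{eq:conesubdiv2} and is literally the stated formula; one then checks well-definedness by noting that replacing $n^\rho$ by $n^\rho+\sum_i a_i v_i^\rho$ leaves $(\tilde K_\rho\underline\go)_1$ fixed and shifts $(\tilde K_\rho\underline\go)_j$, $j\ge 2$, by integer multiples of $(\tilde K_\rho\underline\go)_1$, which does not change $q_{\rho,j}=e^{2\pi i(\tilde K_\rho\underline\go)_j/(\tilde K_\rho\underline\go)_1}$. You should replace your ``telescoping'' claim with this corner-cutting argument (or supply the bookkeeping you promise); as written, the central computation of the theorem is asserted rather than proved.
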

\begin{proof}
We begin from the integral representation of proposition \ref{prop:SrCintegral}, in which the Bernoulli factors are already present, so what we need to do is evaluate the remaining integral. 
We will start from the first of the two integral representations and prove the first factorization formula above in detail, so we  consider the integral
\[
		\int_{\mathbb{R}+i0}  \frac{e^{zt}}{t} \sum_{n\in C\cap \mathbb{Z}^r } e^{t(n\cdot\underline \omega ) } dt .
\]
In order to apply lemma \ref{lem:convergence}, we choose $z$ to lie in the domain where we can close the contour around the upper half-plane, since we are guaranteed that this choice exist. 

Then we can evaluate the integral by summing over all residues in the upper half plane, so let's find out where the poles are located and what their associated residues are. 
We can apply the results of section \ref{sec:conesums} and write the integrand as
\be
	\frac{e^{zt}}{t} \sum_{n\in C\cap \mathbb{Z}^r } e^{t(n\cdot\underline \omega ) } 
	= \frac{e^{zt}}{t} \sum_{k=1}^r \sum_i^{N_k} \frac{(-1)^{r+k} } { \prod_{j=1}^k ( 1 - e^{ t (A_i^k \underline \go )_j } ) } ,
\ee
from which at first glance there seems to be a large number of poles, that might be hard to package. 
However most of these poles are spurious and cancel when we perform the sum, something we can see as follows. 
An apparent family of poles occurs at $t = n / (A^k_i\underline \go)_j$, $n \in \mathbb{Z}$, and as we've explained above, we can think of $(A^k_i\underline \go)_j$ as the dot-product of $\underline \go$ and the $j$:th generator of the cone $C^k_i$ in the subdivision of $C$. 
Since the value of our sum doesn't depend on the choice of subdivision, for any such generating ray $\rho$, we can use lemma \ref{lem:rayexclusion} to choose a subdivision where $\rho$ no longer is a generator.
Thus the associated family of poles are not actually there. 
The only rays we cannot get rid of in this fashion are the generators of $C$ itself: they will of course always be generators in whatever subdivision we choose, and thus they give us the only real families of poles. 
This is the key step of the proof and why we get exactly one contribution from each generator of our cone: all that remain is to sum up all the residues from the poles inside our contour.

%

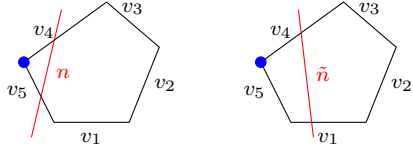
\begin{figure}
\begin{center}
\begin{tikzpicture}
\draw (0,0) -- (1,0)node[draw=none,fill=none,font=\scriptsize,midway,below] {$v_1$} -- (1.4, 1)node[draw=none,fill=none,font=\scriptsize,midway,right] {$v_2$} -- (0.7, 1.6)node[draw=none,fill=none,font=\scriptsize,midway,above] {$v_3$} -- (-0.4, 0.8)node[draw=none,fill=none,font=\scriptsize,midway,left] {$v_4$} -- (0,0)node[draw=none,fill=none,font=\scriptsize,midway,left] {$v_5$};
\draw [blue,fill](-0.4, 0.8) circle (0.07);
\draw[red] (0.1 , 1.5) -- (-0.3, -0.2)node[draw=none,fill=none,font=\scriptsize,midway,right] {$n$};
\end{tikzpicture}
\ \ \ \ 
\begin{tikzpicture}
\draw (0,0) -- (1,0)node[draw=none,fill=none,font=\scriptsize,midway,below] {$v_1$} -- (1.4, 1)node[draw=none,fill=none,font=\scriptsize,midway,right] {$v_2$} -- (0.7, 1.6)node[draw=none,fill=none,font=\scriptsize,midway,above] {$v_3$} -- (-0.4, 0.8)node[draw=none,fill=none,font=\scriptsize,midway,left] {$v_4$} -- (0,0)node[draw=none,fill=none,font=\scriptsize,midway,left] {$v_5$};
\draw [blue,fill](-0.4, 0.8) circle (0.07);
\draw[red] (0.1 , 1.5) -- (0.3, -0.20)node[draw=none,fill=none,font=\scriptsize,midway,right] {$\tilde n$};;
\end{tikzpicture}
\end{center}
\caption{Both images show the polygon base of a 3d cone, with the associated normal vectors to each face, and with the marked corner ``cut off'' by the plane with normal vector $n \  (\tilde n)$. The left cut is an example of an acceptable cut, and the right one of a bad cut, since the ``cut off'' polygon involves more than one vertex of the original polygon and doesn't give  us a simplicial cone.  }\label{fig:cutting}
\end{figure}

There is a subtlety that we should take care of first however, namely that in a general subdivision we can have a number of cones meeting along the generators of $C$, and we would have to add up the contributions from all of them to find the total residues. 
However we can avoid this by choosing a subdivision that ``cuts off'' the corner with the generator of $C$, so that it's only part of a single simplicial cone. 
What we mean by this ``cutting'' might not be immediately apparent, and is perhaps clearest if we think about the base of our cone rather than the cone itself; see figure \ref{fig:cutting} for an illustration. 
The base of the cone is a convex polytope of dimension $r-1$, and a corner (or vertex) $\rho$ of it is associated to $r-1$ normal vectors $v_1^\rho, \ldots, v_{r-1}^\rho $. 
The choice of one additional normal vector $n^\rho$ define a hyperplane cutting through this polytope somewhere, and the claim is that we can pick this vector such that the resulting base polytope of our $r-1$ vectors plus $n^\rho$ gives us a good simplex that doesn't include any other vertex of our base polytope. 
This is always possible due to the goodness condition: there will always exist a choice of $n^\rho$ that completes the $(r-1)$ normals into an $SL_{r}(\mathbb{Z})$ element, and since from the goodness condition we know that the set of normals span the entire lattice, so since we can shift $n^\rho$ by any linear combination of the $v_i^\rho$'s, we see that there is enough freedom of choosing this $n^\rho$ so that we can always make a n acceptable cut, as in figure \ref{fig:cutting}. 
And then the rest of the cone will be a new good cone, which thus can be further subdivided. 
We will now show that the values of the residues coming from the generator of $C$ is independent of this choice of hyperplane. 

So let's consider the contribution of the residues coming from $\rho$.  
As described, we cut it off from the rest of the cone with an appropriate hyperplane with normal $n^\rho$, picked so that 
\[
	\det [ n^\rho, v_1^\rho,\ldots, v_{r-1}^\rho] = 1 ,
\]
where such a $n^\rho$ is guaranteed to exist due to the goodness condition.
We pick an ordering of the normals $v_i^\rho$ so that $m\cdot n^\rho > 0$, where $m$ is the generating vector of $\rho$, so that the matrix $ \tilde K_\rho =  [ n^\rho, v_1^\rho, \ldots, v_{r-1}^\rho]^{-1}$ sends the cone to a properly oriented standard cone. 

Cones in 2d is a special case, as mentioned in section \ref{sec:conemodularity}: there is no ordering to pick and we will get $GL_2(\mathbb{Z})$ matrices instead; but otherwise nothing changes.

With these choices, the family of poles associated to $\rho$ will be at 
\[
	t =  \frac{2\pi i  k} {(\tilde K_\rho \underline \omega )_1}, \ \ \ k \in \mathbb{Z} - \{ 0 \} , 
\]
which is the same as $\frac{2\pi i k}{m \cdot \underline \go}$. 
We now investigate how the residues depend on the choice of hyperplane $n_\rho$. 
Let $n^\rho$ be a vector satisfying the above requirements, and consider another possible choice $\hat n^\rho = n^\rho + m$ that also works. 
The determinant of $[\hat n^\rho, v_1^\rho,\ldots, v_{r-1}^\rho]$ also being equal to one implies that $m$ can be written as a linear combination of $v_i^\rho$'s, 
so that $m=\sum_i a_i v_i^f$, where $a_i \in \mathbb{Z}$. 
The keen reader might here ask why we don't allow $a_i \in \mathbb{Q}$ such that $m$ is still integer: this is because of the goodness condition, which tells us precisely that we can take $a_i$ to be integers without loosing anything. 
Then it follows that
\[
		[\hat n^\rho, v_1^\rho,\ldots, v_{r-1}^\rho] = [ n^\rho , v_1^\rho ,\ldots, v_{r-1}^\rho]
		  \begin{pmatrix}
			1  & 0 & \cdots & 0 \\
			a_1 & 1 & \cdots & 0 \\
			\vdots & 0 & \ddots& \vdots \\
			a_{r-1} & \cdot & \cdots &1
		\end{pmatrix}.
\]
 The inverse of this is
 \[
 	[\hat n^\rho, v_1^\rho,\ldots, v_{r-1}^\rho]^{-1} =  \begin{pmatrix}
			1  & 0 & \cdots & 0 \\
			-a_1 & 1 & \cdots & 0 \\
			\vdots & 0 & \ddots& \vdots \\
			-a_{r-1} & \cdot & \cdots &1
		\end{pmatrix} [ n^\rho , v_1^\rho ,\ldots, v_{r-1}^\rho]^{-1}, 
 \]
from which we see explicitly that the different choices of $n_\rho$ leaves $(\tilde K_\rho \underline\go)_1$ unchanged (as is obvious from how it also is equal to $x_\rho \cdot\underline\go$), whilst the other parameters are shifted by $ -a_i (\tilde K_\rho \underline \go)_1$.
This leaves the residues unchanged since they only depend on the other parameters through $\exp [ 2\pi i \frac{(\tilde K_\rho \underline \go)_j}{(\tilde K_\rho \underline \go)_1}]$.

Now, since we're assuming that $\re \underline\go \in (\check C)^\circ$, we know that $\re (\tilde K_\rho \go )_1 > 0$ for all $\rho$, and the associated poles in the upper half plane are at the points 
\[
	t = \frac{ 2\pi i  n } { (\tilde K_\rho \go )_1 } , \ \ \ n = 1, 2 , \ldots,
\]
and the corresponding residues are given by 
\be
	-\frac{e^{2\pi i n \frac{z}{ (\tilde K_\rho \go )_1} } } { 2\pi i n  \prod_{j=2}^{r} ( 1 - e^{2\pi i n \frac{ (\tilde K_\rho\underline \go )_j }{ (\tilde K_\rho \go )_1 } } )}. 
\ee
We now add up all the residues for one such family, and see that the contribution to the integral is 
\be
 \begin{split}
	&\exp \left [-2\pi i  \sum_{n=1}^\infty \frac{e^{ 2\pi i n \frac{z}{ (\tilde K_\rho\go )_1 } } } { 2\pi i n  \prod_{j=2}^{r} \left ( 1 - e^{2\pi i n \frac{ (\tilde K_\rho\go )_j }{ (\tilde K_\rho\go )_1 } } \right ) } \right ] \\
	&= \exp \left [ - \li_r ( e^{ 2\pi i \frac{z}{ (\tilde K_\rho\go )_1 } } | e^{2\pi i \frac{ (\tilde K_\rho\go )_2 }{ (\tilde K_\rho\go )_1 } } , \ldots,e^{2\pi i \frac{ (\tilde K_\rho\go )_{r} }{ (\tilde K_\rho\go )_1 } } ) \right ] \\
	&=\left ( e^{ 2\pi i  \frac{z}{ (\tilde K_\rho\go )_1 } } | e^{2\pi i  \frac{ (\tilde K_\rho\go )_2 }{ (\tilde K_\rho\go )_1 } } , \ldots,e^{2\pi i  \frac{ (\tilde K_\rho\go )_{r} }{ (\tilde K_\rho\go )_1 } } \right )_\infty , 
\end{split}
\ee
where we use the result of lemma \ref{lem:polylogexp}.
Finally 
we add all the contributions to the integral together, and get
\be \label{eq:multsineres1}
	\exp \left [ \int_{\mathbb{R}+i0}  \frac{e^{zt}}{t} \sum_{n\in C\cap \mathbb{Z}^r } e^{t(n\cdot\underline \omega ) } dt \right ] = \prod_{f \in \Delta_1^C } 
	( e^{ 2\pi i  \frac{z}{ (\tilde K_\rho\go )_1 } } | e^{2\pi i  \frac{ (\tilde K_\rho\go )_2 }{ (\tilde K_\rho\go )_1 } } , \ldots,e^{2\pi i  \frac{ (\tilde K_\rho\go )_{r} }{ (\tilde K_\rho\go )_1} } )_\infty 
\ee
which of course is exactly 
\be
	\prod_{\rho \in \Delta_1^C } ( x_\rho | \underline q_\rho )_\infty,
\ee
using the definitions in the statement of the theorem. 
This gets us to the first statement in the theorem, under the restrictions on $z$, but we can conclude it for any $z$ away from the poles by analytic continuation. 
The second part is proven by the same methods applied to the other integral representation, restricting $z$ so that the second part of the lemma applies and closing the contour around the lower half plane instead. 
\end{proof}

From this result, we can prove an interesting modular property of the ordinary multiple elliptic gamma functions, which includes and generalizes the usual modular properties for the ordinary $G_r$ functions. 
\begin{proposition}
Let $C$ be a good cone of dimension $r \geq 2$, and $\im \left ( \frac{ (\tilde K_\rho \underline \go )_i } { (\tilde K_\rho \underline \go)_r } \right ) \neq 0$, for all $\rho$ and $i = 1, \ldots, r-1$. 
Then the multiple elliptic gamma function satisfies the identity 
\be
	\prod_{f \in \Delta_1^C} G_{r-2} ( \frac{z}{ (\tilde K_\rho \underline \go)_1 } | \frac{ (\tilde K_\rho \underline \go )_2 } { (\tilde K_\rho \underline \go)_1 }, \ldots, \frac{ (\tilde K_\rho \underline \go )_{r} } { (\tilde K_\rho \underline \go)_1 } )
	= \exp \left [ - \frac{2\pi i }{r!} B_{r,r}^C ( z | \underline \go ) \right ] . 
\ee
\end{proposition}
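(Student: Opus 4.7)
The plan is to read this proposition as an immediate corollary of the two infinite product representations of $S_r^C$ in Theorem \ref{thm:SrCproductrep}. Since the left-hand side and the right-hand side of those two representations are equal to the same function $S_r^C(z|\underline\go)$, equating them gives a nontrivial identity between $q$-factorials and the generalized Bernoulli polynomial, which I expect to be exactly the claim after being rewritten in terms of $G_{r-2}$.

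Concretely, first I would divide the first expression for $S_r^C$ in Theorem \ref{thm:SrCproductrep} by the second, cancelling the $S_r^C$-factor on the left. This produces
\[
1 = \exp\!\left[(-1)^r\frac{2\pi i}{r!} B_{r,r}^C(z|\underline\go)\right]\prod_{\rho\in\Delta_1^C}\frac{(x_\rho|\underline q_\rho)_\infty}{(x_\rho^{-1}|\underline q_\rho^{-1})_\infty},
\]
where $x_\rho$ and $\underline q_\rho$ are the data defined there, with $\underline q_\rho$ having exactly $r-1$ entries, matching the number of $\tau$-arguments of $G_{r-2}$.

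Next I would recognize the ratio inside the product as a power of $G_{r-2}$ at the scaled arguments. Using the definition \eqref{eq:G2def2} of $G_r$ with $r$ replaced by $r-2$, namely
\[
G_{r-2}(w|\underline \sigma) = (y^{-1}|\underline p^{-1})_\infty^{(-1)^{r-1}}\,(y|\underline p)_\infty^{(-1)^{r-2}} = \left(\frac{(y|\underline p)_\infty}{(y^{-1}|\underline p^{-1})_\infty}\right)^{(-1)^r},
\]
one identifies each ratio in the product with $G_{r-2}\bigl(\tfrac{z}{(\tilde K_\rho\underline\go)_1} \,\big|\, \tfrac{(\tilde K_\rho\underline\go)_2}{(\tilde K_\rho\underline\go)_1},\ldots,\tfrac{(\tilde K_\rho\underline\go)_r}{(\tilde K_\rho\underline\go)_1}\bigr)^{(-1)^r}$. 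Raising the whole identity to the power $(-1)^r$ flips the sign in the exponential and turns the product into the desired product of $G_{r-2}$'s, which gives the proposition.

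The only thing that needs care is sign bookkeeping with the various factors of $(-1)^r$ and $(-1)^{r+1}$, and checking that the hypothesis $\im\bigl((\tilde K_\rho\underline\go)_i/(\tilde K_\rho\underline\go)_r\bigr)\neq 0$ matches (up to the obvious relabelling of which component one divides by) the hypothesis $\im\bigl((\tilde K_\rho\underline\go)_j/(\tilde K_\rho\underline\go)_1\bigr)\neq 0$ under which Theorem \ref{thm:SrCproductrep} applies, so that both $q$-factorial products converge and define the same meromorphic function. Beyond this sign/labelling check the argument is a direct rewriting, with no further obstacle, since all the analytic work has already been absorbed into the proof of Theorem \ref{thm:SrCproductrep}.
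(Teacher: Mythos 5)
Your proposal is correct and is essentially the same argument as the paper's own proof: the paper likewise divides the two factorizations of $S_r^C$ from Theorem \ref{thm:SrCproductrep}, identifies the resulting ratio of $q$-factorials with $G_{r-2}$ via the definition \eqref{eq:G2def2}, and raises to the power $(-1)^r$ to obtain the stated identity. Your side remark about the mismatch between the hypothesis $\im\bigl((\tilde K_\rho\underline\go)_i/(\tilde K_\rho\underline\go)_r\bigr)\neq 0$ and the condition $\im\bigl((\tilde K_\rho\underline\go)_j/(\tilde K_\rho\underline\go)_1\bigr)\neq 0$ used in Theorem \ref{thm:SrCproductrep} is well taken; this appears to be a typographical slip in the statement rather than a substantive issue.
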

\begin{proof}
We simply compare the two different factorization results of theorem \ref{thm:SrCproductrep}, raising them to the power $(-1)^r$ and then dividing them with each other we arrive at 
\[	\begin{split}
	\exp \left [ - \frac{ 2 \pi i }{r!} B_{r,r}^C ( z | \underline \go ) \right ] 
	= \prod_{\rho \in \Delta^C_1} \left [ ( x_\rho | \underline q_\rho )_\infty  ( x_\rho^{-1} | \underline q_\rho^{-1} )^{-1}_\infty \right ]^{(-1)^r } \\
	= \prod_{\rho \in \Delta^C_1} G_{r-2} \left ( \frac{z}{ (\tilde K_\rho \underline \go )_1 } | \frac{ (\tilde K_\rho \underline \go)_2 }{(\tilde K_\rho \underline \go)_1 }, \ldots, \frac{ (\tilde K_\rho \underline \go )_{r} } {(\tilde K_\rho \underline \go )_1 } \right ) . 
	\end{split}
\]	
where we've used the definition \eqref{eq:G2def2} of $G_r$. 
\end{proof}
One easily sees that when $C = \mathbb{R}^r_{\geq 0}$ this becomes exactly the ordinary property of theorem \ref{thm:GrModularity}, whereas for other cones it gives us more general relations between the ordinary multiple elliptic gamma functions.

\section{Generalized multiple elliptic gamma functions}

Let $C$ be a good cone of dimension $r$, and let $\im z > 0 $, and $\underline \tau \in \comp^r$ such that $\im \left ( \frac{ (\tilde K_\rho \underline \tau)_j } { (\tilde K_\rho \underline \tau)_1 } \right ) \neq 0$ for all generators $\rho$ of $C$. 
Then define the generalized multiple $q$-polylogarithm associated to $C$ as 
\be
	\li^C ( z | \underline \tau ) = \sum_{n=1}^\infty \frac{e^{2\pi i z n  } } { n } \sum_{m \in C \cap \mathbb{Z}^r } e^{2\pi i n (m\cdot \underline \tau ) } ,
\ee
mimicking one of the definitions of the `usual' multiple $q$-polylogarithm, so that if $C$ is the standard cone our version reduces to the ordinary one. 
We also let $\li^{C^\circ}$ be given by the same definition but instead summing over $C^\circ$.
These two functions are then used to define the generalized $q$-factorials associated to $C$:
\bea
	( x | \underline q )^C_\infty &= \exp ( - \li^C ( z | \underline \tau ) ), \\
	( x | \underline q )^{C^\circ}_\infty &= \exp ( - \li^{C^\circ} ( z | \underline \tau ) ).
\eea
With this, we can again mimic the definition of the usual multiple elliptic gamma function and define our generalized version associated to $C$:
\begin{definition}
For a good cone $C$ of dimension $r$ we define the associated generalized elliptic gamma function as 
\be
	G_{r-1}^C ( z | \underline \tau ) = \left \{ ( x | \underline q )^C_{\infty} \right \}^{(-1)^{r-1} }  ( x^{-1} | \underline q )_\infty^{C^\circ}. 
\ee
\end{definition}
Again, if $C$ is the standard cone $\mathbb{R}^r_{\geq 0 }$, then these generalized gamma functions are identical to the usual multiple elliptic gamma functions $G_r$. 
We also note that if $\im ( \underline \go ) \in \check C^\circ$ one can see from the definition above that $G_{r-1}^C$ has the following infinite product representation
\be
	G_{r-1}^C (z | \underline \tau ) = \prod_{n \in C \cap \mathbb{Z}^r } ( 1 - e^{2\pi i ( z + n\cdot \underline \tau ) } )^{(-1)^{r-1} } \prod_{n \in C^\circ \cap \mathbb{Z}^r } ( 1 - e^{2\pi i ( - z + n\cdot \underline \tau ) } ) .
\ee
The $G_r^C$ functions satisfies a few functional relations similar to the ones for the usual multiple elliptic gamma functions, and it also has a similar integral representation, as well as a kind of modular properties inherited from the cone, which we will show in the following sections.
But we note that many of the relations satisfied by the ordinary elliptic gamma functions, such as the `periodicity' relation
\[
	G_r ( z + \tau_j | \underline \tau )   =  G_{r-1} ( z | \underline \tau^{-} ( j )  ) G_{r} ( z | \underline \tau ), 
\]
do not hold for these generalized ones, since they can be understood as coming from symmetries of the standard cones $\mathbb{R}^n_{\geq 0}$, so for less symmetric cones, the corresponding relation is much more complicated.
Two properties that can easily be seen to still hold are
\[
	\begin{split}
		G_r^C ( z | \underline \tau ) &= \frac { 1 } { G_{r}^C ( - z | -\underline \tau ) } , \\
		G_r^C ( z + 1 | \underline \tau ) &= G_r^C(z | \underline \tau ). 
	\end{split}
\]
Next, we give an integral representation, which we will then use to prove a factorization result, or a modular property, of $G_r^C$.

\subsection{Integral representation}
We start by noting that $\li^C$ has the following integral representation.
\begin{proposition}
For $\im z > 0$ and $\im \tau_j \neq 0 \ \forall j$,
\be
	\li^C ( z | \underline \tau ) = - \int_{C_1} \frac{ e^{2\pi i z t }} { t ( 1 - e^{2\pi i t } ) } \sum_{m \in C \cap \mathbb{Z}^r } e^{2\pi i t (m\cdot \underline \tau) } dt \ ,
\ee
where $C_1$ is the contour shown in figure \ref{fig:C1cont}. 
The same expression also holds for $\li^{C^\circ}$, just changing to summing over $C^\circ$ instead. 
\end{proposition}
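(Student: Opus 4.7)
The plan is to treat the contour integral as a sum of residues, adapting the classical derivation of the integral representation of the ordinary multiple $q$-polylogarithm. Writing
\[
F(t) = \frac{e^{2\pi i z t}}{t(1-e^{2\pi i t})}\sum_{m\in C\cap\mathbb{Z}^r} e^{2\pi i t (m\cdot\underline\tau)},
\]
I would interpret the inner cone sum, as throughout the paper, via the simplicial-subdivision analytic continuation of section \ref{sec:conesums}. After fixing such a subdivision the sum becomes a finite combination of meromorphic expressions $(-1)^{r-k}/\prod_{j=1}^{k}(1-e^{2\pi i t (A_i^k\underline\tau)_j})$, whose singularities lie on lines through the origin perpendicular to the real parts of the $A_i^k\underline\tau$. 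Under the hypothesis $\im\tau_j\neq 0$ these singular lines meet the real axis only at $t=0$, so $C_1$ (chosen sufficiently close to $\mathbb{R}_{>0}$) avoids them entirely, and the only poles of $F$ inside $C_1$ are the simple poles at $t=n\in\mathbb{Z}_{>0}$ coming from the factor $(1-e^{2\pi i t})^{-1}$.

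At each such pole a direct calculation gives
\[
\mathrm{Res}_{t=n}F(t) = -\frac{1}{2\pi i}\,\frac{e^{2\pi i z n}}{n}\sum_{m\in C\cap\mathbb{Z}^r} e^{2\pi i n(m\cdot\underline\tau)}.
\]
Checking the orientation of $C_1$ in Figure \ref{fig:C1cont} one sees that each positive integer is encircled counterclockwise, so the residue theorem produces $\int_{C_1}F\,dt = 2\pi i\sum_{n\geq 1}\mathrm{Res}_{t=n}F = -\li^C(z|\underline\tau)$, which is exactly the claim. To make the use of the residue theorem rigorous in the presence of infinitely many poles, I would truncate $C_1$ to a bounded contour enclosing only the first $N$ positive integers, apply the ordinary residue theorem, and pass to the limit $N\to\infty$.

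The technical work sits in the decay estimates that justify this limit. On the horizontal portions of $C_1$ one has $|e^{2\pi i z t}|=e^{-2\pi(\im z)\re t}$, exponentially small as $\re t\to+\infty$ because $\im z>0$. Each factor $(1-e^{2\pi i t(A_i^k\underline\tau)_j})^{-1}$ tends to $1$ or to $0$ there according to the sign of $\im(A_i^k\underline\tau)_j$ (nonzero for a generic subdivision), and hence stays bounded at infinity; so $F$ is absolutely integrable on $C_1$, the truncation tails vanish, and the series of residues is dominated by a multiple of $\sum_n e^{-2\pi(\im z)n}/n$ and converges absolutely. The statement for $\li^{C^\circ}$ follows by the same argument applied to the analytic continuation of the interior sum. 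The main obstacle is the dual role played by the cone sum — as a meromorphic function in $t$ integrated along $C_1$ on the one hand, and as the absolutely convergent series whose terms must reappear as residues at integer $t$ on the other — but this compatibility is built into the section \ref{sec:conesums} construction, which is subdivision-independent and agrees with the original sum wherever the latter converges absolutely.
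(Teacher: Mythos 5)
Your argument is correct and is exactly the paper's approach: the paper's entire proof reads ``This is directly verified by summing over all the residues from the poles inside the contour,'' and your residue computation at $t=n\in\mathbb{Z}_{>0}$, the orientation check of $C_1$, and the decay estimates simply fill in the details that the paper leaves implicit. No discrepancy to report.
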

\begin{proof}
This is directly verified by summing over all the residues from the poles inside the contour.
\end{proof}
This directly gives us an integral expression for $G_{r-1}^C$: 
\[
	G_{r-1}^C (z | \underline \tau) = \exp \left \{ \int_{C_1}  \left [ (-1)^r \frac{e^{2\pi i z t }}{t ( 1 - e^{2\pi i t )} }\sum_{m\in C\cap\mathbb{Z}^r } e^{2\pi i t (m\cdot \underline \tau) }   -
	  \frac{e^{-2\pi i z t }}{t ( 1 - e^{2\pi i t }) }\sum_{m\in C^\circ \cap\mathbb{Z}^r } e^{2\pi i t (m\cdot \underline \tau) }  \right ] dt \right \} ,
\]
which we will use in the proof of the modularity property of $G_r^C$. 

\subsection{Factorization or modularity property}
We now prove an interesting factorization property of $G_r^C$, that is very similar to the factorization property of $S_r^C$.
One can also view it as something like a modular property, generalizing the modular property of the normal $G_r$-functions, with a dependence on the cone $C$. 

\begin{theorem} \label{thm:GrCfactorization}
Let $C$ be a good cone of dimension $r$, and let $\underline \tau \in \comp^r$ be such that $Im \left( \frac{ (\tilde K_\rho \underline \tau )_j } { (\tilde K_\rho \underline \tau )_1 } \right ) \neq 0$ for all generators $\rho$ of $C$ and for $j=2,\ldots,r$. 
Then 
\bea
	G_{r-1}^C ( z | \underline \tau ) &=& \exp \left [ \frac{2\pi i }{ (r+1)!} B_{r+1,r+1}^{\hat C} ( z | \underline \tau,-1)\right ] \prod_{\rho \in\Delta^C_1} (SK_\rho)^* G_{r-1} (z|\underline \tau) \\
	&=& \exp \left [ - \frac{2\pi i }{ (r+1)!} B_{r+1,r+1}^{\hat C} ( z | \underline \tau,1)\right ] \prod_{\rho\in\Delta^C_1} (S^{-1}K_\rho)^* G_{r-1} (z|\underline \tau),
\eea
where $K_\rho$ is the $SL_{r+1}(\mathbb{Z})$ defined by the generator $\rho$, as described in section \ref{sec:conemodularity}, $S$ is the element defined in \eqref{eq:Sdefinition}, and where by the notation $(SK_\rho)^*$ we mean that the group element acts on the parameters of the function, $(z|\underline \tau)$, as a fractional linear transformation, described by equation \eqref{eq:groupaction1}. 
$B_{r+1,r+1}^{\hat C}$ is the generalized Bernoulli polynomial associated to the $(r+1)$-dimensional cone $\hat C = C \times \mathbb{R}_{\geq 0}$.

\end{theorem}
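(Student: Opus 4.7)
The plan is to mimic the proof of Theorem~\ref{thm:SrCproductrep}, but starting from the integral representation of $G_{r-1}^C$ displayed just above the statement. First, applying Lemma~\ref{lem:conesum1} to rewrite the $C^\circ$-sum as a $C$-sum under $t\mapsto -t$, and noting that in the appropriate half-plane the factor $(1-e^{2\pi i t})^{-1}$ expands as $\sum_{n\geq 0}e^{2\pi i t n}$, the integrand can be reinterpreted as a lattice sum on the $(r+1)$-dimensional cone $\hat C = C\times\mathbb{R}_{\geq 0}$ with parameter list $(\underline\tau,-1)$. This observation explains transparently why $B^{\hat C}_{r+1,r+1}(z|\underline\tau,-1)$ is the Bernoulli polynomial appearing in the answer, and why an extra ``$-1$-slot'' parameter should show up in each $(SK_\rho)^*G_{r-1}$ factor.

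Next, I would deform the contour $C_1$ to a horizontal line $\mathbb{R}+ia$ with $|a|$ large. An analogue of Lemma~\ref{lem:convergence}, adapted to the extra $(1-e^{2\pi i t})^{-1}$ factor (which is bounded away from its integer poles once $a$ avoids a small neighbourhood of each integer), shows that the integral along $\mathbb{R}+ia$ vanishes in the limit for $z$ in a suitable domain. Hence $\log G_{r-1}^C$ equals $2\pi i$ times the sum of residues swept out by the deformation. These split into three contributions: a higher-order pole at $t=0$; simple poles at nonzero integers from $(1-e^{2\pi i t})^{-1}$; and cone-sum poles along $t=k/(\tilde K_\rho\underline\tau)_1$, $k\in\mathbb{Z}\setminus\{0\}$, one family for each generator $\rho$ of $C$, the spurious subdivision-dependent families being eliminated exactly as in Theorem~\ref{thm:SrCproductrep} via Lemma~\ref{lem:rayexclusion}.

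The residue at $t=0$, extracted from the Laurent expansion via the generating-series definition~\eqref{eq:genBernoullidef} applied to the cone $\hat C$, produces the prefactor $\exp\bigl[\tfrac{2\pi i}{(r+1)!}B^{\hat C}_{r+1,r+1}(z|\underline\tau,-1)\bigr]$. For each generator $\rho$, the nonzero integer residues and the cone-generator residues combine as follows: after the corner-cutting subdivision of Theorem~\ref{thm:SrCproductrep} (which makes the residue assignment independent of the auxiliary choice $n^\rho$), the joint sum over $(n,k)\in\mathbb{Z}_{>0}^2$ of the two families collapses, via Lemma~\ref{lem:polylogexp}, into the two $q$-factorials that assemble $G_{r-1}$ at shifted arguments. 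A direct block computation with $K_\rho$ and $S$ confirms that $(SK_\rho)\cdot(z|\underline\tau)$ has last $\tau$-slot $-1/(\tilde K_\rho\underline\tau)_1$ and remaining $\tau$-slots $(\tilde K_\rho\underline\tau)_j/(\tilde K_\rho\underline\tau)_1$, exactly matching the argument list produced by the residue calculation (the ``$-1$-slot'' being supplied by the integer-pole family). Multiplying over $\rho\in\Delta^C_1$ reconstructs $\prod_\rho(SK_\rho)^* G_{r-1}(z|\underline\tau)$ and gives the first identity; the second follows by deforming the contour downward and substituting $S^{-1}$ for $S$.

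The main obstacle I anticipate is the interlocking of the two residue families at fixed $\rho$: superficially these are simple poles at unrelated locations, yet they must package together into a single $(SK_\rho)^* G_{r-1}$ factor containing the extra ``$-1$-slot''. Carrying this out requires recognizing the combined double series as an instance of $\li$ for the extended parameter list $(SK_\rho)\cdot(z|\underline\tau)$, which is the crux of the proof. A secondary technical point is the adaptation of Lemma~\ref{lem:convergence} to the present integrand, which requires bounding $(1-e^{2\pi i t})^{-1}$ uniformly along a carefully chosen sequence of contours $\mathbb{R}+ia_n$ that stay away from the integer poles.
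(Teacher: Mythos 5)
Your overall strategy --- evaluate the integral representation by residues, kill the spurious subdivision-dependent pole families via Lemma~\ref{lem:rayexclusion}, read the Bernoulli prefactor off the pole at the origin, and interpret the factor $(1-e^{2\pi i t})^{-1}$ as an extra coordinate of $\hat C=C\times\mathbb{R}_{\geq 0}$ --- is the paper's strategy, and your block computation identifying the transformed argument list with $(SK_\rho)\cdot(z|\underline\tau)$ is correct. But there are two genuine gaps. First, the contour move: $C_1$ is an open contour both of whose ends run to $+\infty$, so it cannot simply be ``deformed to a horizontal line $\mathbb{R}+ia$''. The paper's device is to use $\li^{C^\circ}(z+1|\underline\tau)=\li^{C^\circ}(z|\underline\tau)$ to replace $z$ by $z-1$ in the $C^\circ$ term, after which (by Lemma~\ref{lem:conesum1}) the integrand is an \emph{odd} function of $t$; only this symmetry lets one fold $\int_{C_1}$ into an integral over $\mathbb{R}\pm i0$ plus one half of the residue at the origin, which is where the factor $\tfrac12$ and (after combining the $\hat C$ and $\hat C^\circ$ pieces via \eqref{eq:bernoulliCproperty}) the stated Bernoulli prefactor come from. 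Your proposal omits this step, and without it or an equivalent device the residue bookkeeping does not close.

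Second, and more seriously, your treatment of the integer poles is wrong. The residues of $(1-e^{2\pi i t})^{-1}$ at nonzero integers are precisely what $C_1$ encloses in the first place: summing them reproduces $\li^C$ and $\li^{C^\circ}$, i.e.\ $\log G^C_{r-1}(z|\underline\tau)$ itself --- the left-hand side. They form a single family that cannot be apportioned among the generators $\rho$, and there is no ``joint sum over $(n,k)\in\mathbb{Z}_{>0}^2$'': each factor $(SK_\rho)^*G_{r-1}$ arises from a \emph{single} sum over the cone poles $t=k/(\tilde K_\rho\underline\tau)_1$, $k\in\mathbb{Z}_{>0}$, the extra ``$-1$-slot'' being supplied by the value $(1-e^{2\pi i k/(\tilde K_\rho\underline\tau)_1})^{-1}$ of the prefactor \emph{at} those poles (equivalently, by the extra coordinate of $\hat C$ in the corner cone at $\rho$), not by the integer-pole family. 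In the correct argument the integer poles lie on the real axis and are excluded from the half-plane one closes around; if you instead enclose them you merely recover $\log G^C_{r-1}=\pm\log G^C_{r-1}+\cdots$ and are back where you started. To repair the proof, adopt the odd-integrand manipulation and compute each generator's contribution as $\exp[-\li_{r+1}]$ of the $(SK_\rho)$-transformed arguments via Lemma~\ref{lem:polylogexp}.
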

\begin{proof}
%
%
%
%
%
%
In the following, we use the fact that
\[
	\li^C( z + 1 | \underline \tau ) = \li^C ( z | \underline \tau), 
\]
something that is obvious from its definition. 
This lets us shift $z\rightarrow z-1$ in the second term of the integral representation of $G_{r-1}^C$:
\be 
	G_{r-1}^C (z | \underline \tau) = \exp \left \{ \int_{C_1}  \left [ (-1)^r \frac{e^{2\pi i z t }}{t ( 1 - e^{2\pi i t )} }\sum_{m\in C\cap\mathbb{Z}^r } e^{2\pi i t (m\cdot \underline \tau) }   -
	  \frac{e^{2\pi i(1 - z)t }}{t ( 1 - e^{2\pi i t }) }\sum_{m\in C^\circ \cap\mathbb{Z}^r } e^{2\pi i t (m\cdot \underline \tau) }  \right ] dt \right \} .
\ee
The point of this shift is that it makes it manifest, together with lemma \ref{lem:conesum1}, that the integrand here is an odd function of $t$. 
This fact lets us change the path of integration into either $\mathbb{R}+i\epsilon$ or $-\mathbb{R} - i \epsilon$, and then subsequently close it around the upper or lower half-plane, letting us evaluate it by collecting the poles in either half-plane.
Of course we still need to take care of the additional contribution from the pole at the origin: this will again give us the generalized Bernoulli polynomial just as for the factorization of $S_r^C$. 
Indeed, the rest of the proof follows very much the same lines as that proof: one chooses $z$ so that we can close the contour around say the upper half plane, and then one collect the residues, arguing again that the only proper poles are the ones corresponding to generators of $C$ by using lemma \ref{lem:rayexclusion}, and that their values do not depend on the choice of subdivision.
Then one computes and adds up their residues in the exact same way. 
The first term of the integrand is in fact the same as for the $S_r^C$ computation, and gives for each generator $\rho$ a factor of 
\[
	\left ( e^{2\pi i \frac{z}{ (\tilde K_\rho \underline \tau)_1 } }| e^{2\pi i \frac{1}{ (\tilde K_\rho \underline \tau)_1 } },  e^{2\pi i  \frac{(\tilde K_\rho \underline \tau)_2}{ (\tilde K_\rho \underline \tau)_1 } } , \ldots ,   e^{2\pi i  \frac{(\tilde K_\rho \underline \tau)_2}{ (\tilde K_\rho \underline \tau)_1 }} \right )_\infty^{(-1)^{r}} ,
\]
The second term similarly gives 
\[
	\left ( e^{-2\pi i \frac{z}{ (\tilde K_\rho \underline \tau)_1 } }| e^{-2\pi i \frac{1}{ (\tilde K_\rho \underline \tau)_1 } },  e^{-2\pi i  \frac{(\tilde K_\rho \underline \tau)_2}{ (\tilde K_\rho \underline \tau)_1 } } , \ldots ,   e^{-2\pi i  \frac{(\tilde K_\rho \underline \tau)_2}{ (\tilde K_\rho \underline \tau)_1 }} \right )_\infty^{(-1)^{r-1}} , 
\]
and combining them gives us an ordinary $G_{r-1}$ function: 
\be \label{eq:modularGr}
	G_{r-1} \left ( -\frac{z}{ (\tilde K_\rho \underline \tau)_1 } | - \frac{1}{ (\tilde K_\rho \underline \tau)_1 }, -  \frac{(\tilde K_\rho \underline \tau)_2}{ (\tilde K_\rho \underline \tau)_1 }, \ldots, -\frac{(\tilde K_\rho \underline \tau)_2}{ (\tilde K_\rho \underline \tau)_1 } \right ) = (S^{-1} K_\rho )^* G_{r-1} ( z | \underline \tau ) , 
\ee
where $(S^{-1} K_\rho)\in SL_{r}(\mathbb{Z})$ acts on the parameters of $G_{r-1}$ as a modular transformation defined in equation \eqref{eq:groupaction1}.

Finally, the contribution from the origin is given by
\be
	\frac{1}{2} \int_{C_0}  \left [ (-1)^r \frac{e^{2\pi i z t }}{t ( 1 - e^{2\pi i t )} }\sum_{m\in C\cap\mathbb{Z}^r } e^{2\pi i t (m\cdot \underline \tau) }   -
	  \frac{e^{2\pi i(1 - z)t }}{t ( 1 - e^{2\pi i t }) }\sum_{m\in C^\circ \cap\mathbb{Z}^r } e^{2\pi i t (m\cdot \underline \tau) }  \right ] dt , 
\ee
where $C_0$ again is a small circle around the origin. 
Now as before it's easy to see that the two terms both will give us exactly the generalized Bernoulli polynomials of $\hat C$ and $\hat C^\circ$ respectively, where $\hat C = C \times \mathbb{R}_{\geq 0}$. 
The `extra dimension' comes from the factor of $1/(1-e^{2\pi i t } )$ in front, which also gives us the parameters $(\underline \tau, 1 )$. 
And finally by applying the property \eqref{eq:bernoulliCproperty} of our generalized Bernoulli polynomials, we see that the two terms actually give the same result so they combine nicely, and we find that the integral above is equal to  
\[
	-\frac{2\pi i}{(r+1)!} B_{r+1,r+1}^{\hat C} ( z | (\underline \tau, 1) ). 
\]	
Combining this with the result of equation \eqref{eq:modularGr} gives us the second statement of the theorem, and the first version is proved in a completely analogous way, except choosing to modify to modify the contour of integration differently and close the integral around the lower half-plane instead. 
And analytic continuation again gives us that the statement holds for any $z$ away from the poles.

%

\end{proof}
%
%

Finally we give the following theorem which comes from the integral representation of $G^C_{r}(z|\underline \tau)$ and $S_r^C(z|\underline \go)$; or that can also be proven using the factorization results of both $G_r^C$ and $S_r^C$: comparing the two factorization results also makes it clear why the Bernoulli polynomials appear in this expansion.
We also use that both $B_{r,r}^C$ and $S_r^C$ are invariant under $(z|\underline\go) \mapsto (cz| c\underline \go )$ for the proof. 
\begin{theorem}[Representation of $G_r^C$ by an infinite product of $S_{r+1}^C$]
Assume that $\im \tau_j > 0 \ \forall j$, then
\be
	\begin{split}
		G_r^C ( z | \underline \tau ) =& \exp \left [ \frac{2\pi i }{(r+2)! } B_{r+2,r+2}^{\hat C} ( z | \underline \tau, -1 ) \right ] \\
		&\times \prod_{k=0}^\infty \frac{ S_{r+1}^C ( z + k + 1 | \underline \tau )^{(-1)^r } S_{r+1}^C ( z - k  | \underline \tau )^{(-1)^r } } { \exp \left [ \frac{\pi i }{(r+1)! } (B_{r+1,r+1}^C ( z+k+1|\underline \tau ) - B_{r+1,r+1}^C ( z-k|\underline \tau )) \right ] }
	\end{split}
\ee
\end{theorem}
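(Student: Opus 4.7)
The plan is to derive the identity from the integral representation of $\log G_r^C(z|\underline\tau)$, by expanding $1/(1-e^{2\pi i t})$ as a geometric series on each half-plane and recognizing each term as the integral representation of a shifted $S_{r+1}^C$ supplied by Proposition~\ref{prop:SrCintegral}; the residue at the origin then delivers the $B_{r+2,r+2}^{\hat C}$ prefactor. As noted in the paragraph preceding the theorem, one could alternatively combine the two factorizations of Theorems~\ref{thm:SrCproductrep} and~\ref{thm:GrCfactorization} (applied in the appropriate dimensions) and reduce to the analogous ordinary identity expressing $G_r$ as an infinite product of shifted $S_{r+1}$'s.

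Following the first route, I would start from the evident analogue of the integral representation displayed just before Theorem~\ref{thm:GrCfactorization}, namely
\[
\log G_r^C(z|\underline\tau) = \int_{C_1} \bigl[(-1)^{r+1}\phi_1(t) - \phi_2(t)\bigr]\,dt,
\]
where $\phi_1(t) = \frac{e^{2\pi i z t}}{t(1-e^{2\pi i t})}\sum_{m\in C\cap\mathbb{Z}^{r+1}}e^{2\pi i t (m\cdot\underline\tau)}$ and $\phi_2(t)$ is the same expression with $z\mapsto -z$ and the sum taken over $C^\circ\cap\mathbb{Z}^{r+1}$. I would then deform $C_1$ into a small circle $C_0$ enclosing the origin together with the two horizontal contours $\mathbb{R}\pm i\epsilon$, and on each of the latter substitute the convergent geometric expansion $\frac{1}{1-e^{2\pi i t}}=\sum_{k\geq 0}e^{2\pi i k t}$ on $\mathbb{R}+i\epsilon$ and $\frac{1}{1-e^{2\pi i t}}=-\sum_{k\geq 1}e^{-2\pi i k t}$ on $\mathbb{R}-i\epsilon$; the interchange of sum and integral is justified by the decay of $\sum_{m\in C}e^{2\pi i t(m\cdot\underline\tau)}$ guaranteed by $\im\tau_j>0$. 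Each term in the resulting double series, after the rescaling $t\mapsto t/(2\pi i)$ that matches the contour of Proposition~\ref{prop:SrCintegral}, is precisely the integrand of the integral representation of $\log S_{r+1}^C$ evaluated at an appropriately shifted argument, minus the $\pm\tfrac{\pi i}{(r+1)!}B_{r+1,r+1}^C$ counter-term carried by that proposition. Combining the $\phi_1$ and $\phi_2$ pieces using Lemma~\ref{lem:conesum1} to exchange $C^\circ$ and $C$ sums, and pairing the series from the two half-planes, assembles the product $\prod_{k\geq 0}S_{r+1}^C(z+k+1|\underline\tau)^{(-1)^r}S_{r+1}^C(z-k|\underline\tau)^{(-1)^r}$ together with the Bernoulli denominator $\exp[\tfrac{\pi i}{(r+1)!}(B_{r+1,r+1}^C(z+k+1|\underline\tau)-B_{r+1,r+1}^C(z-k|\underline\tau))]$ of the theorem.

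The Bernoulli prefactor $\exp\bigl[\tfrac{2\pi i}{(r+2)!}B_{r+2,r+2}^{\hat C}(z|\underline\tau,-1)\bigr]$ arises from the residual integral around $C_0$. Indeed, near $t=0$ the factor $1/(1-e^{2\pi i t})$ is the generating series $\sum_{j\geq 0}e^{-2\pi i t j}$ for the additional $\mathbb{R}_{\geq 0}$ direction of $\hat C = C\times\mathbb{R}_{\geq 0}$ with parameter $-1$, so its product with $\sum_{m\in C\cap\mathbb{Z}^{r+1}}e^{2\pi i t(m\cdot\underline\tau)}$ equals the generating series $\sum_{n\in \hat C\cap\mathbb{Z}^{r+2}}e^{2\pi i t(n\cdot(\underline\tau,-1))}$ defining $B_{r+2,n}^{\hat C}$ through \eqref{eq:genBernoullidef}. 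Extracting the residue of the full integrand at $t=0$ therefore picks off the coefficient of $t^{r+1}$, producing precisely $\tfrac{2\pi i}{(r+2)!}B_{r+2,r+2}^{\hat C}(z|\underline\tau,-1)$; the $\phi_1$ and $\phi_2$ contributions combine constructively by the reflection identity \eqref{eq:bernoulliCproperty} applied to $\hat C$. The main obstacle is therefore the combinatorial bookkeeping: carefully handling the signs $(-1)^{r},(-1)^{r+1}$, the conversion between $C$ and $C^\circ$ sums via Lemma~\ref{lem:conesum1}, and the alignment of the upper- and lower-half-plane expansions into the single symmetric sum over $k\geq 0$ with arguments $z+k+1$ and $z-k$, before invoking analytic continuation in $z$ to extend the identity from the strip where the contour manipulations converge to all $z$ away from the poles.
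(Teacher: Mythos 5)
Your proposal follows exactly the route the paper itself indicates (the paper gives no detailed proof, only the remark that the identity ``comes from the integral representation of $G_r^C$ and $S_r^C$''), and your key steps --- the geometric expansion of $1/(1-e^{2\pi i t})$ on each half-line, recognition of the resulting terms as the integrands of shifted $S_{r+1}^C$'s via Proposition~\ref{prop:SrCintegral}, the origin residue producing the $B_{r+2,r+2}^{\hat C}$ prefactor, and the use of Lemma~\ref{lem:conesum1} together with $z\mapsto z-1$ periodicity to handle the $C^\circ$ sum before analytic continuation --- are all correct and match the intended argument. One small slip worth fixing: $\sum_{j\geq 0}e^{-2\pi i t j}=1/(1-e^{-2\pi i t})$, whereas $1/(1-e^{2\pi i t})=-\sum_{j\geq 1}e^{-2\pi i t j}$, so the extra factor naturally builds the interior direction (hence $\hat C^\circ$) with parameter $-1$ and an extra sign; the reflection identity \eqref{eq:bernoulliCproperty} that you already invoke is precisely what converts this to the stated $B_{r+2,r+2}^{\hat C}(z|\underline\tau,-1)$.
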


\providecommand{\href}[2]{#2}\begingroup\raggedright\endgroup

\end{document}